\begin{document}
\title{Explicit Kodaira--Spencer map over Hilbert modular varieties}
\author{Ziqi Guo}
\maketitle

\theoremstyle{plain}
\newtheorem{thm}{Theorem}[section]
\newtheorem{theorem}[thm]{Theorem}
\newtheorem{cor}[thm]{Corollary}
\newtheorem{corollary}[thm]{Corollary}
\newtheorem{lem}[thm]{Lemma}
\newtheorem{lemma}[thm]{Lemma}
\newtheorem{pro}[thm]{Proposition}
\newtheorem{proposition}[thm]{Proposition}
\newtheorem{prop}[thm]{Proposition}
\newtheorem{definition}[thm]{Definition}
\newtheorem{assumption}[thm]{Assumption}

\theoremstyle{remark} 
\newtheorem{remark}[thm]{Remark}
\newtheorem{example}[thm]{Example}
\newtheorem{remarks}[thm]{Remarks}
\newtheorem{problem}[thm]{Problem}
\newtheorem{exercise}[thm]{Exercise}
\newtheorem{situation}[thm]{Situation}
\newtheorem{acknowledgment}[thm]{Acknowledgment}

\numberwithin{equation}{subsection}

\newcommand{\ZZ}{\mathbb{Z}}
\newcommand{\CC}{\mathbb{C}}
\newcommand{\QQ}{\mathbb{Q}}
\newcommand{\RR}{\mathbb{R}}
\newcommand{\HH}{\mathcal{H}}     

\newcommand{\ad}{\mathrm{ad}}            
\newcommand{\NT}{\mathrm{NT}}         
\newcommand{\nonsplit}{\mathrm{nonsplit}}         
\newcommand{\Pet}{\mathrm{Pet}}         
\newcommand{\Fal}{\mathrm{Fal}}         

\newcommand{\cs}{{\mathrm{cs}}}         

\newcommand{\ZZn}{\mathbb{Z}[\frac{1}{n}]}         
\newcommand{\ZZN}{\mathbb{Z}[\frac{1}{N}]}     
\newcommand{\XU}{X_U}    
\newcommand{\XXU}{\mathcal{X}_U}    
\newcommand{\OA}{\underline{\Omega}_\mathcal{A}}
\newcommand{\OU}{\Omega_{\mathcal{X}_U/\mathbb{Z}[\frac{1}{n}]}}
\newcommand{\WA}{\underline{\omega}_\mathcal{A}}
\newcommand{\WU}{\omega_{\mathcal{X}_U/\mathbb{Z}[\frac{1}{n}]}}
\newcommand{\HHom}{\mathcal{H}\mathrm{om}}

\newcommand{\pair}[1]{\langle {#1} \rangle}
\newcommand{\wpair}[1]{\left\{{#1}\right\}}
\newcommand{\wh}{\widehat}
\newcommand{\wt}{\widetilde}

\newcommand\Spf{\mathrm{Spf}}

\newcommand{\lra}{{\longrightarrow}}

\newcommand{\matrixx}[4]
{\left( \begin{array}{cc}
  #1 &  #2  \\
  #3 &  #4  \\
 \end{array}\right)}        


\newcommand{\BA}{{\mathbb {A}}}
\newcommand{\BB}{{\mathbb {B}}}
\newcommand{\BC}{{\mathbb {C}}}
\newcommand{\BD}{{\mathbb {D}}}
\newcommand{\BE}{{\mathbb {E}}}
\newcommand{\BF}{{\mathbb {F}}}
\newcommand{\BG}{{\mathbb {G}}}
\newcommand{\BH}{{\mathbb {H}}}
\newcommand{\BI}{{\mathbb {I}}}
\newcommand{\BJ}{{\mathbb {J}}}
\newcommand{\BK}{{\mathbb {K}}}
\newcommand{\BL}{{\mathbb {L}}}
\newcommand{\BM}{{\mathbb {M}}}
\newcommand{\BN}{{\mathbb {N}}}
\newcommand{\BO}{{\mathbb {O}}}
\newcommand{\BP}{{\mathbb {P}}}
\newcommand{\BQ}{{\mathbb {Q}}}
\newcommand{\BR}{{\mathbb {R}}}
\newcommand{\BS}{{\mathbb {S}}}
\newcommand{\BT}{{\mathbb {T}}}
\newcommand{\BU}{{\mathbb {U}}}
\newcommand{\BV}{{\mathbb {V}}}
\newcommand{\BW}{{\mathbb {W}}}
\newcommand{\BX}{{\mathbb {X}}}
\newcommand{\BY}{{\mathbb {Y}}}
\newcommand{\BZ}{{\mathbb {Z}}}

\newcommand{\CA}{{\mathcal {A}}}
\newcommand{\CB}{{\mathcal {B}}}
\newcommand{\CD}{{\mathcal{D}}}
\newcommand{\CE}{{\mathcal {E}}}
\newcommand{\CF}{{\mathcal {F}}}
\newcommand{\CG}{{\mathcal {G}}}
\newcommand{\CH}{{\mathcal {H}}}
\newcommand{\CI}{{\mathcal {I}}}
\newcommand{\CJ}{{\mathcal {J}}}
\newcommand{\CK}{{\mathcal {K}}}
\newcommand{\CL}{{\mathcal {L}}}
\newcommand{\CM}{{\mathcal {M}}}
\newcommand{\CN}{{\mathcal {N}}}
\newcommand{\CO}{{\mathcal {O}}}
\newcommand{\CP}{{\mathcal {P}}}
\newcommand{\CQ}{{\mathcal {Q}}}
\newcommand{\CR }{{\mathcal {R}}}
\newcommand{\CS}{{\mathcal {S}}}
\newcommand{\CT}{{\mathcal {T}}}
\newcommand{\CU}{{\mathcal {U}}}
\newcommand{\CV}{{\mathcal {V}}}
\newcommand{\CW}{{\mathcal {W}}}
\newcommand{\CX}{{\mathcal {X}}}
\newcommand{\CY}{{\mathcal {Y}}}
\newcommand{\CZ}{{\mathcal {Z}}}

\newcommand{\ab}{{\mathrm{ab}}}
\newcommand{\Ad}{{\mathrm{Ad}}}
\newcommand{\an}{{\mathrm{an}}}
\newcommand{\Aut}{{\mathrm{Aut}}}

\newcommand{\Br}{{\mathrm{Br}}}
\newcommand{\bs}{\backslash}
\newcommand{\bbs}{\|\cdot\|}

\newcommand{\Ch}{{\mathrm{Ch}}}
\newcommand{\cod}{{\mathrm{cod}}}
\newcommand{\cont}{{\mathrm{cont}}}
\newcommand{\cl}{{\mathrm{cl}}}
\newcommand{\criso}{{\mathrm{criso}}}
\newcommand{\de}{{\mathrm{d}}}
\newcommand{\dR}{{\mathrm{dR}}}
\newcommand{\df}{\mathrm{det}^*}
\newcommand{\disc}{{\mathrm{disc}}}
\newcommand{\Div}{{\mathrm{Div}}}
\renewcommand{\div}{{\mathrm{div}}}

\newcommand{\Eis}{{\mathrm{Eis}}}
\newcommand{\End}{{\mathrm{End}}}

\newcommand{\Frob}{{\mathrm{Frob}}}

\newcommand{\Gal}{{\mathrm{Gal}}}
\newcommand{\GL}{{\mathrm{GL}}}
\newcommand{\GO}{{\mathrm{GO}}}
\newcommand{\GSO}{{\mathrm{GSO}}}
\newcommand{\GSp}{{\mathrm{GSp}}}
\newcommand{\GSpin}{{\mathrm{GSpin}}}
\newcommand{\GU}{{\mathrm{GU}}}
\newcommand{\BGU}{{\mathbb{GU}}}

\newcommand{\Hom}{{\mathrm{Hom}}}
\newcommand{\Hol}{{\mathrm{Hol}}}
\newcommand{\HC}{{\mathrm{HC}}}
\newcommand{\id}{\mathrm{id}}
\newcommand{\Img}{{\mathrm{Im}}}
\newcommand{\Ind}{{\mathrm{Ind}}}
\newcommand{\inv}{{\mathrm{inv}}}
\newcommand{\Isom}{{\mathrm{Isom}}}

\newcommand{\Jac}{{\mathrm{Jac}}}
\newcommand{\JL}{{\mathrm{JL}}}

\newcommand{\Ker}{{\mathrm{Ker}}}
\newcommand{\KS}{{\mathrm{KS}}}

\newcommand{\Lie}{{\mathrm{Lie}}}

\newcommand{\new}{{\mathrm{new}}}
\newcommand{\NS}{{\mathrm{NS}}}

\newcommand{\ord}{{\mathrm{ord}}}
\newcommand{\ol}{\overline}
\newcommand{\otf}{\otimes^*}
\newcommand{\rank}{{\mathrm{rank}}}

\newcommand{\PGL}{{\mathrm{PGL}}}
\newcommand{\PSL}{{\mathrm{PSL}}}
\newcommand{\Pic}{\mathrm{Pic}}
\newcommand{\Prep}{\mathrm{Prep}}
\newcommand{\Proj}{\mathrm{Proj}}

\newcommand{\Picc}{\mathcal{P}ic}

\renewcommand{\Re}{{\mathrm{Re}}}
\newcommand{\Res}{{\mathrm{Res}}}
\newcommand{\red}{{\mathrm{red}}}
\newcommand{\reg}{{\mathrm{reg}}}
\newcommand{\sm}{{\mathrm{sm}}}
\newcommand{\sing}{{\mathrm{sing}}}
\newcommand{\SL}{\mathrm{SL}}
\newcommand{\Sp}{\mathrm{Sp}}
\newcommand{\Sym}{{\mathrm{Sym}}}

\newcommand{\tor}{{\mathrm{tor}}}
\newcommand{\tr}{{\mathrm{tr}}}

\newcommand{\ur}{{\mathrm{ur}}}

\newcommand{\vol}{{\mathrm{vol}}}

\newcommand{\ds}{\displaystyle}

\tableofcontents

\section{Introduction}
The goal of this article is to explicitly compute the Kodaira--Spencer map over Hilbert--Siegel modular varieties and twisted Hilbert modular varieties. Our result contains two parts: one is the computation 
of images over integral models, the other is the comparison of metrics in complex setting.
The definition of such map is canonical and easy to 
understand, but it turns out to be difficult to compute the explicit constants since the
choices of identifications are not canonical. 
This paper can be viewed as a generalization of \cite{Yuan2}, which gives the explicit
formula of the Kodaira--Spencer map over Shimura curves. 

Let $F$ be a totally real number field with degree $g$ over $\QQ$. Let
$B=F$ in the first case, and 
$B$ a totally indefinite quaternion algebra over $F$ in the second case.
We always use $d_B$ to denote the discriminant of this quaternion algebra
in the second case.
In the first case, let $(V,\psi)$ be a $2r$-dimensional
symplectic space over $F$, while in the second case
let $V=B$ be a symplectic $B$-module with symplectic form $\psi$ defined by the
reduced trace. Note that we can define Shimura data $(G,X)$ for both cases. 

After choosing some open compact group $U=\prod_p U_p\subset G(\mathbb{A}_f)$, in either case, we can then 
define the corresponding PEL Shimura varieties, denoted by $\XU$, which is over $\CC$. More precisely, we have
\begin{equation*}
    X_U:=\GSp(V)\backslash\HH_r^{g,\pm}\times G(\BA_f)/U
\end{equation*}
in the first case and
\begin{equation*}
    X_U:=B^\times\backslash \mathcal{H}^{g,\pm}\times B_{\mathbb{A}_f}^\times/U
\end{equation*}
in the second case.
We call the Shimura varieties in the first case Hilbert--Siegel modular varieties,
and the second case twisted Hilbert modular varieties. Furthermore, we fix
a maximal order $\mathcal{O}_B\subset B$ in both cases.
We also let $n=n(U)$ be the product of primes $p$ such that $U_p\ne\mathcal{O}^\times_{B,p}$. 
We define $\XXU$ over $\ZZn$ to be the canonical integral model, which is a stack
such that for any $\ZZn$-scheme $S$, 
$\XXU(S)$ parameterizes families of abelian schemes over $S$ with PEL-conditions.
We refer to $\S$\ref{sec Sh} for details.

Denote by $\pi:\mathcal{A}\longrightarrow\XXU$ the universal abelian scheme 
and $\epsilon:\mathcal{X}_U\longrightarrow\mathcal{A}$ the zero section in both cases.
Then we can define relative differential sheaves 
$\Omega_{\mathcal{A}/\mathbb{Z}[\frac{1}{n}]}$, $\Omega_{\mathcal{A}/\mathcal{X}_U}$,
and $\Omega_{\mathcal{X}_U/\mathbb{Z}[\frac{1}{n}]}$, the relative dualizing sheaves
$\omega_{\mathcal{A}/\mathcal{X}_U}$ and $\omega_{\mathcal{X}_U/\mathbb{Z}[\frac{1}{n}]}$ , 
and the relative tangent sheaves $T_{\mathcal{X}_U/\mathbb{Z}[\frac{1}{n}]}$ and
$T_{\mathcal{A}/\mathcal{X}_U}$ . Furthermore, we have the Lie algebra
\begin{equation*}
    \Lie(\mathcal{A}):=\epsilon^*T_{\mathcal{A}/\mathcal{X}_U}\cong \pi_*T_{\mathcal{A}/\mathcal{X}_U},
\end{equation*}
and the Hodge bundles
\begin{equation*}
    \underline{\Omega}_{\mathcal{A}}:=\epsilon^*\Omega_{\mathcal{A}/\mathcal{X}_U}\cong \pi_*\Omega_{\mathcal{A}/\mathcal{X}_U},\quad
    \underline{\omega}_{\mathcal{A}}:=\epsilon^*\omega_{\mathcal{A}/\mathcal{X}_U}\cong \pi_*\omega_{\mathcal{A}/\mathcal{X}_U}.
\end{equation*}

Endow the Hodge bundle $\underline{\omega}_{\mathcal{A}}$ on $\mathcal{X}_U$ with the Faltings metric $\lVert\cdot\rVert_{\Fal}$ and endow $\omega_{\mathcal{X}_U/\mathbb{Z}[\frac{1}{n}]}$ 
on $\mathcal{X}_U$ with the Petersson metric $\lVert\cdot\rVert_{\Pet}$.  We define the Faltings metric
\begin{equation*}
    \lVert\alpha\rVert_{\Fal}^2:=\frac{1}{(2\pi)^l}\left|\int_{\mathcal{A}_x(\mathbb{C})}\alpha\wedge\bar{\alpha}\right|
\end{equation*}
for any section $\alpha\in\underline{\omega}_{\mathcal{A}}(x)\cong\Gamma(\mathcal{A}_x,\omega_{\mathcal{A}_x/\mathbb{C}})$, where $l\in\ZZ$
depends on the two cases. We also define the Petersson metric
\begin{equation*}
    \lVert\de\tau\rVert_{\Pet}:=2^{\frac{gr(r+1)}{2}}\prod_{i=1}^g\det(Y_i)^{\frac{r+1}{2}}
\end{equation*}
for Hilbert--Siegel modular varieties and
\begin{equation*}
    \lVert\de\tau\rVert_{\Pet}:=2^{g}\prod_{i=1}^g\Img(\tau_i)
\end{equation*}
for twisted Hilbert modular varieties. Here in both cases,
$\de\tau$ is a nowhere-vanishing
section of $\omega_{\mathcal{X}_U/\mathbb{Z}[\frac{1}{n}]}$, while 
$Y_i$ and $\tau_i$ denote some points in upper half space.
See $\S$\ref{defks} for details.

Finally, we introduce the Kodaira--Spencer map. Consider the exact sequence
\begin{equation*}
    0\longrightarrow\pi^*\Omega_{\mathcal{X}_U/\mathbb{Z}[\frac{1}{n}]}\longrightarrow\Omega_{\mathcal{A}/\mathbb{Z}[\frac{1}{n}]}\longrightarrow\Omega_{\mathcal{A}/\mathcal{X}_U}\longrightarrow 0.
\end{equation*}
Applying derived functors of $\pi_*$ gives a connecting morphism
\begin{equation*}
    \phi_0:\pi_*\Omega_{\mathcal{A}/\mathcal{X}_U}\longrightarrow R^1\pi_*(\pi^*\Omega_{\mathcal{X}_U/\mathbb{Z}[\frac{1}{n}]}).
\end{equation*}
We call this the Kodaira--Spencer map.

The key point is that we can construct a morphism between two line bundles
$\underline{\omega}_{\mathcal{A}}$ and $\omega_{\mathcal{X}_U/\mathbb{Z}[\frac{1}{n}]}$
from the Kodaira--Spencer map. This allows us to consider the beautiful relation between
these two line bundles.

Now we present the main results of this paper in our two cases.
\begin{theorem}
In the case of Hilbert--Siegel modular varieties, there is a canonical isomorphism
\begin{equation*}
    \psi:\underline{\omega}_\mathcal{A}^{\otimes r+1}\longrightarrow\omega_{\mathcal{X}_U/\mathbb{Z}[\frac{1}{n}]}.
\end{equation*}    
Moreover, under $\psi$, we have $\lVert\cdot\rVert_{\Fal}^{r+1}=\lVert\cdot\rVert_{\Pet}$.
\end{theorem}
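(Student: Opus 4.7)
The plan is to separate the proof into two parts: first construct the canonical isomorphism $\psi$ over $\ZZn$, then verify the metric identity analytically over $\CC$.

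For the construction, I would begin by applying the projection formula to the target of $\phi_0$: since $\pi^*\Omega_{\mathcal{X}_U/\mathbb{Z}[\frac{1}{n}]}$ is pulled back from the base,
\[
R^1\pi_*(\pi^*\Omega_{\mathcal{X}_U/\mathbb{Z}[\frac{1}{n}]}) \cong R^1\pi_*\mathcal{O}_\mathcal{A} \otimes \Omega_{\mathcal{X}_U/\mathbb{Z}[\frac{1}{n}]}.
\]
Identifying $R^1\pi_*\mathcal{O}_\mathcal{A} \cong \Lie(\mathcal{A}^\vee)$ and using the principal polarization supplied by the PEL datum to get $\Lie(\mathcal{A}^\vee)\cong\Lie(\mathcal{A})\cong\underline{\Omega}_\mathcal{A}^\vee$, $\phi_0$ rewrites as a bilinear map
\[
\KS: \underline{\Omega}_\mathcal{A} \otimes \underline{\Omega}_\mathcal{A} \longrightarrow \Omega_{\mathcal{X}_U/\mathbb{Z}[\frac{1}{n}]}.
\]
A standard argument (as in Faltings--Chai) shows $\KS$ is symmetric as a consequence of the polarization, and it is $\mathcal{O}_F$-linear by the PEL conditions, so it decomposes over the embeddings $\sigma_1,\dots,\sigma_g:F\hookrightarrow\mathbb{R}$ as
\[
\bigoplus_{i=1}^g \Sym^2\underline{\Omega}_{\mathcal{A},i} \longrightarrow \Omega_{\mathcal{X}_U/\mathbb{Z}[\frac{1}{n}]},
\]
where $\underline{\Omega}_{\mathcal{A},i}$ is the $\sigma_i$-isotypic component (rank $r$). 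Both sides are locally free of rank $gr(r+1)/2$, and I would verify fiberwise, via the Grothendieck--Messing deformation theory of principally polarized $\mathcal{O}_F$-abelian schemes, that the map is an isomorphism. Taking determinants and using $\det\Sym^2 E=(\det E)^{\otimes(r+1)}$ for $E$ of rank $r$ produces
\[
\psi:\underline{\omega}_\mathcal{A}^{\otimes(r+1)}=\bigotimes_{i=1}^g(\det\underline{\Omega}_{\mathcal{A},i})^{\otimes(r+1)}\stackrel{\sim}{\longrightarrow}\omega_{\mathcal{X}_U/\mathbb{Z}[\frac{1}{n}]}.
\]

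For the metric comparison I would pass to the universal cover $\mathcal{H}_r^g$ and work in complex-analytic coordinates. Above $\tau=(\tau_i)\in\mathcal{H}_r^g$ the universal abelian variety is (up to lattice normalization) $\mathbb{C}^{gr}/(\mathcal{O}_F^r+\tau\mathcal{O}_F^r)$, and $\alpha=dz_1\wedge\cdots\wedge dz_{gr}$ is a canonical frame of $\underline{\omega}_\mathcal{A}$. A direct integration yields $\lVert\alpha\rVert_{\Fal}^2=(2\pi)^{-l}\cdot 2^{gr}\prod_i\det Y_i$ with $Y_i=\Img(\tau_i)$. Next, using the flat frame of $\underline{\Omega}_\mathcal{A}$ induced by $dz_j$ and the coordinates on $\mathcal{H}_r^g$ given by the entries of the symmetric matrices $\tau_i$, I would compute $\KS$ explicitly and recover $\psi(\alpha^{\otimes(r+1)})$ as an explicit scalar multiple of $d\tau$. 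Raising $\lVert\alpha\rVert_{\Fal}^{r+1}$ should then match the $2^{gr(r+1)/2}\prod_i\det(Y_i)^{(r+1)/2}$ appearing in the Petersson metric, and this pins down the normalization $l$ in the definition of $\lVert\cdot\rVert_{\Fal}$.

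The hard part will be bookkeeping all the constants introduced by the successive identifications: the projection formula, the principal polarization, the $\mathcal{O}_F$-isotypic decomposition, the passage to symmetric squares, and finally the analytic uniformization each contribute factors of $2$ or $2\pi$. Orchestrating these so that exactly the constants $2^{gr(r+1)/2}$ and $(2\pi)^l$ emerge is the real content of the theorem, whereas the symmetry of $\KS$ and the fiberwise-isomorphism check are essentially classical. In particular, the explicit computation of $\KS$ in a symplectic frame on $\mathcal{H}_r^g$ has to be done carefully, and is the step where I expect the bulk of the calculation to live.
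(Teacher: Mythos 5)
Your overall strategy — rewrite $\phi_0$ via the projection formula and polarization as a symmetric $\mathcal{O}_F$-bilinear pairing on $\underline{\Omega}_\mathcal{A}$, check it is an isomorphism onto $\Omega_{\mathcal{X}_U/\ZZn}$ by deformation theory, take determinants using $\det\Sym^2 E\cong(\det E)^{\otimes(r+1)}$, and then compare metrics by an explicit \v{C}ech computation on $\mathcal{H}_r^g$ — is the same as the paper's, and the analytic constants you indicate do match the paper's (for instance $\lVert\wedge\de z_i\rVert_{\Fal}^2=\pi^{-gr}\prod\det Y_i$ and $\psi$ contributing $(2\pi i)^{-r(r+1)/2}$ per archimedean place). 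However, there is a genuine gap in the integral construction step.

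You pass from the $\mathcal{O}_F$-linearity of $\KS$ to a decomposition $\bigoplus_{i=1}^g\Sym^2\underline{\Omega}_{\mathcal{A},i}$ indexed by the real embeddings $\sigma_i:F\hookrightarrow\mathbb{R}$, and you then define $\psi$ by taking determinants of the isotypic pieces. This decomposition is an archimedean (or \'etale-locally split) phenomenon: as a sheaf of $\mathcal{O}_F\otimes_\ZZ\mathcal{O}_{\mathcal{X}_U}$-modules, $\underline{\Omega}_\mathcal{A}$ only splits into $\sigma_i$-eigenspaces after one inverts the discriminant $d_F$ of $F/\QQ$ (and base-changes to a ring containing its splitting field). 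The theorem asserts the isomorphism over $\ZZn$, where $n$ need not be divisible by $d_F$, so at such primes your $\underline{\Omega}_{\mathcal{A},i}$ and the identification $\underline{\omega}_\mathcal{A}^{\otimes(r+1)}=\bigotimes_i(\det\underline{\Omega}_{\mathcal{A},i})^{\otimes(r+1)}$ are not defined. The paper sidesteps this precisely with the operator $\det^*$ (top exterior power over $\mathcal{O}_F\otimes_\ZZ\mathcal{O}_{\mathcal{X}^\sm}$, extended to coherent $\mathcal{O}_F\otimes\mathcal{O}_{\mathcal{X}^\sm}$-modules via finite locally free resolutions) and Lemma 3.3, which yields the canonical identification $\det(\det^*\mathcal{F})\cong\det\mathcal{F}$ and the rank-$(r{+}1)$ power law at the level of $\det^*$. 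This reproduces your $\det\Sym^2 E=(\det E)^{\otimes(r+1)}$ without ever invoking the isotypic splitting. To repair your argument you should either replace the $\sigma_i$-decomposition by the $\det^*$/norm formalism (so the construction is honestly over $\ZZn$), or restrict the integral statement to $\ZZ[1/(n\,d_F)]$ and separately extend over the remaining primes using the codimension-two remark on the singular locus; as written, the construction of $\psi$ does not go through over $\ZZn$.
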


\begin{theorem}\label{m1}
In the case of twisted Hilbert modular varieties, there is a canonical morphism
\begin{equation*}
    \psi:\underline{\omega}_\mathcal{A}^{\otimes 2}\longrightarrow\omega_{\mathcal{X}_U/\mathbb{Z}[\frac{1}{n}]}^{\otimes 2},
\end{equation*}    
and its image is the subsheaf $d_B^g\WU^{\otimes 2}$.
Moreover, under $\psi$, we have $\lVert\cdot\rVert_{\Fal}^{2}=\lVert\cdot\rVert_{\Pet}^2$.
\end{theorem}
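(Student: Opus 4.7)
The plan is to derive $\psi$ from the Kodaira--Spencer connecting map by exploiting the $\CO_B$-equivariance imposed by the PEL datum, then to compute the image by a local analysis that separates split from ramified primes, and finally to verify the metric identity via complex uniformization. For the construction, Grothendieck--Messing deformation theory for the PEL moduli problem identifies $T_{\CX_U/\ZZn}$ with the sheaf of $\CO_B$-linear, polarization-symmetric homomorphisms $\underline{\Omega}_\CA\to\Lie(\CA)$; dually, $\Omega_{\CX_U/\ZZn}$ appears as a canonical quotient of the $\CO_B$-symmetric part of $\underline{\Omega}_\CA\otimes_{\CO_{\CX_U}}\underline{\Omega}_\CA$. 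Applying the determinant functor over $\CO_{\CX_U}$ and squaring---to compensate for the absence of a canonical ``square root'' on the $\CO_B$-module side at ramified primes---yields the desired morphism
\begin{equation*}
\psi:\underline{\omega}_\CA^{\otimes 2}\lra \omega_{\CX_U/\ZZn}^{\otimes 2}.
\end{equation*}

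To identify the image I would work locally on $\CX_U$. At a geometric point with residue characteristic $p\nmid d_B$, Morita equivalence $\CO_{B,p}\cong M_2(\CO_{F,p})$ decomposes $\underline{\Omega}_\CA$ into two locally free $\CO_F\otimes\CO_{\CX_U}$-modules of rank one that are interchanged by the Rosati involution, and a direct computation parallel to the pure Hilbert case (the previous theorem specialized to $r=1$) shows that $\psi$ is an isomorphism at $p$. At a prime $p\mid d_B$, $\CO_{B,p}$ is the maximal order in a division quaternion $F_p$-algebra; it carries a unique two-sided maximal ideal $\mathfrak{m}_p$ with $\mathfrak{m}_p^2=p\CO_{B,p}$ and reduced norm $\mathfrak{p}$, which is also the local different $\mathfrak{d}_{B/F,p}$. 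Choosing a normal basis of $\underline{\Omega}_\CA$ adapted to $\mathfrak{m}_p$ and computing explicitly, I expect the cokernel of $\psi$ at $p$ to be controlled by $\mathfrak{d}_{B/F,p}$ raised to the appropriate power in the $g$-dimensional determinant; multiplying over ramified primes then produces the image $d_B^g\omega_{\CX_U/\ZZn}^{\otimes 2}$.

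For the metric comparison I would pass to the complex uniformization. After fixing isomorphisms $B\otimes_\QQ\RR\cong\prod_{i=1}^g M_2(\RR)$ compatible with the real embeddings of $F$, the universal abelian scheme over $\CH^{g,\pm}\times B_{\BA_f}^\times/U$ has fibers of the form $\CC^{2g}/\Lambda(\tau)$, parametrized by $\tau=(\tau_i)\in\CH^g$. A standard period computation of $\int_{\CA_\tau(\CC)}\alpha\wedge\bar\alpha$ on a basis of $\Gamma(\CA_\tau,\omega_{\CA_\tau/\CC})$ adapted to the $\CO_B$-structure expresses $\lVert\alpha\rVert_\Fal^2$ as an explicit function of $\prod_i\Img(\tau_i)$; matching this against $\lVert\de\tau\rVert_\Pet=2^g\prod_i\Img(\tau_i)$ yields $\lVert\cdot\rVert_\Fal^2=\lVert\cdot\rVert_\Pet^2$ under $\psi$, provided the integer $l$ in the Faltings normalization is chosen to be compatible with the $2^g$ normalization on the Petersson side.

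The main obstacle is the local computation at ramified primes. Away from $d_B$, Morita equivalence reduces the problem to the Hilbert--modular situation already subsumed by the previous theorem. At ramified primes, however, $\CO_B\otimes\CO_{\CX_U}$ is noncommutative and admits no splitting idempotent, so one must carefully analyze how the combined $\CO_B$-equivariance and Rosati symmetry cut $\underline{\Omega}_\CA\otimes\underline{\Omega}_\CA$ down to $\Omega_{\CX_U/\ZZn}$; showing that the resulting discrepancy in the determinant is \emph{exactly} $\mathfrak{d}_{B/F}^g$---and not a proper multiple or divisor of it---is the subtle step, and it is what forces the exponent $g$ in the factor $d_B^g$.
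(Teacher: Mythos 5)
Your proposal follows the same broad strategy as the paper: construct $\psi$ from the Kodaira--Spencer connecting map together with the PEL deformation-theoretic identification of $T_{\CX_U/\ZZn}$, analyze the image prime by prime (Morita equivalence at split primes, the division-order structure at ramified primes), and verify the metric identity via complex uniformization. The paper's actual construction, however, uses the determinant functor $\df$ over $\CO_F\otimes_\ZZ\CO_{\CX_U^\sm}$ rather than over $\CO_{\CX_U}$: since $\underline{\Omega}_\CA$ has rank $2$ over $\CO_F\otimes\CO_{\CX_U^\sm}$, applying $\df$ to the Kodaira--Spencer map $\underline{\Omega}_\CA\to\underline{\Omega}_{\CA^t}^\vee\otimes\Omega_{\CX_U/\ZZn}$ lands in $\Omega_{\CX_U/\ZZn}^{\otf 2}$ automatically; the square is a structural consequence of that rank, not a "compensation" for a missing square root.

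There are two genuine gaps you should close. First, in the metric comparison, the integer $l$ is not a free parameter to be "chosen compatibly"---it is forced to be $l=2g$, the relative dimension. More importantly, the identity $\lVert\cdot\rVert_\Fal^2=\lVert\cdot\rVert_\Pet^2$ does not follow from a period computation alone: the Faltings norm of $\bigwedge_{j=1}^{2g}\de z_j$ equals $\frac{d_B^g}{\pi^{2g}}\prod_i\Img(\tau_i)^2$ because $\vol(M_2(\RR)/\sigma_i(\CO_B))=\sigma_i(d_B)$, so an extra $d_B^g$ appears on the Faltings side. This must be cancelled by the explicit complex Kodaira--Spencer formula
$\psi\bigl((\bigwedge_j\de z_j)^{\otimes 2}\bigr)=\bigl(\frac{d_B}{(2\pi i)^2}\bigr)^g(\bigwedge_i\de\tau_i)^{\otimes 2}$,
which in turn requires computing the connecting homomorphism via \v{C}ech cohomology and the Appell--Humbert description of $H^1(\CA_\tau,\CO)$. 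Without this compensating $d_B^g$, the metric identity would be off by that factor.

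Second, at a ramified prime $\mathfrak{p}\mid d_B$, the heart of the argument is a precise structural fact you do not identify: after base change to the unramified quadratic extension $\CO_{F,\mathfrak{p}^2}$, there are exactly two isomorphism classes of rank-$2$ $R$-modules with left $\CO_D$-action satisfying the determinant condition (distinguished by whether the $j$-action $e_1M\to e_2M$ is surjective), and passing to the dual interchanges them. This forces $T$ and $T'=\Lie(\CA^t)^\vee$ to be the two nonisomorphic classes and is exactly what produces the single factor $\varpi$ in $\Hom_{\CO_D}(T',T)\otimes_R T'\cong e_1T\oplus\varpi e_2T$, hence the precise image $\mathfrak{p}\,\df(\CT_\mathfrak{p})$. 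Merely "choosing a normal basis adapted to $\mathfrak{m}_p$" does not by itself explain why the discrepancy is $\mathfrak{p}$ on the nose rather than $\mathfrak{p}^0$ or $\mathfrak{p}^2$.
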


Note that the second result is a generalization of the main theorem in \cite{Yuan2}.

Our explicit expression of the Kodaira--Spencer map is useful in many situations. For example,
in \cite{YZ}, which studies the average Colmez conjecture, the authors use an explicit
expression of the Kodaira--Spencer map. Moreover, keeping the above notations, we can compute
arithmetic intersection numbers of the above hermitian line bundles. 
We illustrate this idea in the case of twisted Hilbert modular varieties. 
As a consequence of our Theorem \ref{m1},
\begin{equation*}
    \frac{\widehat{\deg}(\hat{c}_1(\omega_{\mathcal{X}_{U}/\ZZ},\lVert\cdot\rVert_\Pet)^{g+1})}{\deg(\omega_{\mathcal{X}_{U,\QQ}/\QQ}^g)}=
    \frac{\widehat{\deg}(\hat{c}_1(\underline{\omega}_{\mathcal{A}},\lVert\cdot\rVert_{\Fal})^{g+1})}{\deg(\underline{\omega}_{\mathcal{A},\QQ}^g)}+g\log d_B.
\end{equation*}
Here $\widehat{\deg}{(\hat{c}_1(\omega,\lVert\cdot\rVert)^{g+1})}$ means the 
arithmetic self-intersection number of any hermitian line bundle 
$(\omega,\lVert\cdot\rVert)$ over
$\XXU$, and $\deg(\omega_\QQ^g)$ means the self-intersection number of the 
underlying line bundle $\omega_\QQ$ over generic fiber. Note that this result 
is closely related to \cite{BKG}
which studies the intersection theory over Hilbert modular surfaces.
This formula also allows us to find compatibility betweeen
the main theorem in \cite{Yuan3} and the formula of \cite{KRY} over $\QQ$.

The organization of this paper is given as follows.
In $\S$\ref{sec Sh}, we recall the definition of Hilbert--Siegel
modular varieties and twisted Hilbert modular varieties. 

In $\S$\ref{sec KS}, we introduce the notion of
the Kodaira--Spencer map, which establishes a canonical map between the Hodge bundle induced 
from the universal abelian scheme and the relative differential sheaf over 
the Shimura variety. We will also consider their determinants,
which are line bundles, and define some suitable metrics on these line bundles. Next, we 
construct a morphism between these metrized line bundles from the Kodaira--Spencer map,
which we also call the Kodaira--Spencer map.
Our main theorems are Theorem \ref{main1} and Theorem \ref{main2},
which establish an explicit relation between two metrized line bundles. We also check
the image over integral models in this seciton.

In $\S$\ref{sec metric}, we introduce a useful trick to compute an isomorphism between the
tangent space of a complex abelian variety and its first structure sheaf cohomology 
explicitly, which is in the
language of \v{C}ech cohomology. Then we give an explicit formula of the Kodaira--Spencer map 
in the complex setting, which also implies the explicit formula of our morphism between line 
bundles. Finally, we complete our proof of the main theorem by comparing two metrics on each 
line bundles using the explicit expression.

\textbf{Acknowledgement}
The author would like to thank professor Xinyi Yuan for a lot of help, without whom he
 would never complete this paper. He would like to thank his friend Weixiao Lu for many helpful advice.
 He would also like to thank Roy Zhao and professor Liang Xiao for many
 useful communications.

\section{Hilbert--Siegel modular varieties and twisted Hilbert modular varieties} \label{sec Sh}
In this section, we review some basics on PEL Shimura varieties. We will give a clear 
definition of Hilbert--Siegel modular varieties and twisted Hilbert modular varieties, which are
the PEL Shimura varieties we work with in this paper.

\subsection{Siegel upper half space}
The story of Shimura varieties always starts at hermitian symmetric domains, i.e., 
symmetric hermitian manifolds of noncompact type. Among these 
spaces, there is a useful example which is crucial to Siegel modular varieties, the Siegel 
upper half space.

Recall the Siegel upper half space $\mathcal{H}_r$ of degree $r$ consists of all the symmetric
complex matrices $Z=X+iY$, where $X$ and $Y$ are real $r\times r$ matrices with $Y$ positive definite.
The map $Z=(Z_{ij})\longmapsto (Z_{ij})_{j\le i}$ identifies $\mathcal{H}_r$ with an open 
subset of $\mathbb{C}^{r(r+1)/2}$.

Note that there is a natural transitive action of $\Sp(2r,\mathbb{R})$ on $\mathcal{H}_r$ by
\begin{equation*}
    \begin{pmatrix} A & B \\ C & D \end{pmatrix}Z=(AZ+B)(CZ+D)^{-1},
\end{equation*}
where $A,B,C,D$ are $r\times r$ real matrices. Clearly, $\begin{pmatrix} 0 & I_r \\ -I_r & 0 \end{pmatrix}$ acts as an involution on $\mathcal{H}_r$.
We remark that in the case of $r=1$, this is the classical upper half plane, with 
$\Sp(2,\mathbb{R})\cong\SL(2,\mathbb{R}) $.

One remarkable fact is that $\mathcal{H}_r$ has modular interpretation similar to upper-half plane,
i.e., $\mathcal{H}_r$ can be identified with the set of $J\in\Hom(V)$ such that $J^2=-id$ and
$\psi(v,Jv)>0$ for any $v\in V$. Here $(V,\psi)$ is a symplectic space of dimension $2r$ over 
$\mathbb{R}$. More
precisely, $Z=X+iY$ corresponds to the matrix $\begin{pmatrix} X & Y \\ W & -X \end{pmatrix}$ where
$X^2+WY=-I_r$, while this matrix is in one-to-one corresponds with $J$ after choosing a symplectic 
basis.
Furthermore, $Z=X+iY$ also gives a lattice $\mathbb{Z}^r\oplus Z\cdot\mathbb{Z}^r$ in 
$\mathbb{C}^r$, denote by $\Lambda_Z$, and then defines an abelian variety $\mathbb{C}^r/\Lambda_Z$.
This complex torus is indeed an abelian variety due to the existence of a Riemann form.

In this paper, we will work on the relative differential sheaf $\Omega_{\mathcal{H}_r/\mathbb{C}}$ and the relative dualizing sheaf $\omega_{\mathcal{H}_r/\mathbb{C}}$.
We denote by $\{\de Z_{ij}\}_{j\le i}$ the basis of $\Omega_{\mathcal{H}_r/\mathbb{C}}$ and $\wedge^{j\le i}\de Z_{ij}$
the basis of $\omega_{\mathcal{H}_r/\mathbb{C}}$.
Meanwhile, there is only one metric on $\mathcal{H}_r$ up to scaling whose isometry group is $\Sp(2r,\mathbb{R})$, but we 
do not use this fact in this paper. Instead, we are interested in some metric on the line bundle
$\omega_{\mathcal{H}_r/\mathbb{C}}$.

\subsection{Hilbert--Siegel modular variety}
Now we introduce the concept of a Hilbert--Siegel modular variety. This definition is a generalization of
the definition of a Siegel modular variety, and we sometimes use the name 
Hilbert modular variety for simplicity throughout this paper.

We give the definition of Hilbert--Siegel modular varieties.
In this case, we choose $F$ to be a totally 
real field of degree $g$ over $\mathbb{Q}$, $B=F$ is the trivial central algebra over $F$,
while $V$ is a dimension $2r$ symplectic space over $F$ with symplectic form $\psi'$.
Take $\psi=\tr_{F/\QQ}\circ\psi'$, we complete the definition. Note that when $F=\QQ$, we will
obtain Siegel modular varieties in this way. In fact, it follows that 
$G'=\Res_{F/\QQ}\Sp(V,\psi')$ and $G=\Res^\circ_{F/\QQ}\GSp(V,\psi')$.
Here $\Res^\circ_{F/\QQ}$ means for any algebraic group $H/F$ with similitude map
$v:H\longrightarrow\mathbb{G}_m$, define 
\begin{equation*}
    \Res^\circ_{F/\QQ}H:=(\Res_{F/\QQ}v)^{-1}\mathbb{G}_m.
\end{equation*}
We also conclude that the $G(\RR)$-conjugacy class $X=\HH_r^{g,\pm}$ with 
each connected component $X^+\cong\HH_r^g$. Here $\HH_r^{g,\pm}=(\HH_r^+)^g\sqcup(\HH_r^-)^g$.
Furthermore, if we set $\{\sigma_i\}_{i=1,...,g}$ to be distinct embeddings of $F$ into $\RR$,
each $\sigma_i$ then induces an action of $\GSp(V)_\RR$ on $\HH_r^\pm$. Hence, taking all 
$\sigma_i$, there is an action of $\Res^\circ_{F/\QQ}\GSp(V)$ on $\HH_r^{g,\pm}$, which is the one defining
Shimura varieties.

We further set $\mathcal{O}_F\subset F$ the ring of
integers, which is also the maximal order. We also fix a lattice $\Lambda\subset V$, and
for convenience, we choose a suitable scalar such that $\psi$ is unimodular when restricted
to $\Lambda$, which is always possible.

Denote by $\mathbb{A}_f$ the ring of finite ad\`{e}les. 
Let $U=\prod_p U_p\subset G(\mathbb{A}_f)$ be an open compact subgroup. One then defines
\begin{equation*}
    n:=n(U)=\prod_{p:U_p\ne G(\mathbb{Z})\otimes_{\ZZ}\ZZ_p}p
\end{equation*}
to be the product of primes $p$ such that $U_p$ is not maximal. Here 
$G(\ZZ)$ consists of $g\in G(\QQ)$ with 
$g\Lambda(\mathcal{O}_F)=\Lambda(\mathcal{O}_F)$. 
By assumption $U$ contains $U(N)$ for some positive integer $N$,
which is  the principal open subgroup of $G(\mathbb{A}_f)$:
\begin{equation*}
    U(N):=\{g\in G(\mathbb{A}_f)| g\Lambda(\hat{\mathcal{O}}_F)=\Lambda(\hat{\mathcal{O}}_F),\quad
    (g-\id)\Lambda(\hat{\mathcal{O}}_F)\subset N\Lambda(\hat{\mathcal{O}}_F)\}.
\end{equation*}
Then we have the Hilbert--Siegel modular variety
\begin{equation*}
    X_U:=\GSp(V)\backslash\HH_r^{g,\pm}\times G(\BA_f)/U.
\end{equation*}

$X_U$ has a canonical integral model $\mathcal{X}_U$ over
$\ZZn$ which is a stack. For any $\ZZn$-schemes $S$, $\XXU(S)$ then represents the modular 
problem of quadruples $(A,\lambda,i,\bar{\eta})$ as follows:\\
(1) $A$ is an abelian scheme over $S$ of relative dimension $rg$;\\
(2) $\lambda$ is a principal polarization of $A$;\\
(3) $i:\mathcal{O}_F\longrightarrow\End_S(A)$ is a ring homomorphism which sends $\id_{\mathcal{O}_F}$ to the
Rosati involution induced by $\lambda$, and the determinant condition
$\det_{\Lie(A)}(x_1b_1+\cdots+x_{rg}b_{rg})=\det_{V^{-1,0}}(x_1b_1+\cdots+x_{rg}b_{rg})$. Here we have the hodge decompostion of $V(\CC)=V^{-1,0}\oplus V^{0,-1}$, where $V^{-1,0}$ is the eigenspace on which
$z\in\CC$ acts as $z$;\\
(4) $\bar{\eta}$ is a $U$-level structure on $A$.

Here we present some details about the $U$-level structure. Note that the 
following discussion also works for twisted Hilbert modular varieties,
which we will define in the next subsection.
The $U$-level structure is defined as below. Choose $N$ dividing a high power of $n$
such that $U$ contains $U(N)$. Consider pairs $(T,\eta)$, where $T\longrightarrow S$ is
an \'{e}tale cover of $S$, and $\eta:\Lambda(\mathcal{O}_B/N\mathcal{O}_B)\longrightarrow A(T)[N]$
is a $\mathbb{Z}$-linear map satisfying $\eta_2=\eta_1\circ u$ for some $u\in U$ acting on 
$\Lambda(\mathcal{O}_B/N\mathcal{O}_B)$, where $\eta_j:\Lambda(\mathcal{O}_B/N\mathcal{O}_B)\longrightarrow A(T\times_S T)[N]$ is the composition
of $\eta$ with the map $p_j^*: A(T)[N]\longrightarrow A(T\times_S T)[N]$ induced by
projection $p_j:T\times_S T\longrightarrow T$ for $j=1,2$. Here $A(T)$ means the
$T$-point of $A$.
Two pairs $(T,\eta)$ and
$(T',\eta')$ are said to be $U$-equivalent if there is an \'{e}tale cover $T"\longrightarrow S$
refining both $T\longrightarrow S$ and $T'\longrightarrow S$ such that $\eta'_{T"}=\eta_{T"}\circ u$
for some $u\in U$, where $\eta_{T"}$ and $\eta'_{T"}$ are maps naturally induced by $\eta$ and
$\eta'$ respectively. Finally, a $U$-level structure on $A$ is an equivalence class of pairs
$(T,\eta)$.

Note that $\mathcal{X}_U$ is a Deligne--Mumford
stack which is flat and a relative local complete intersection over $\ZZn$, and a scheme with the same properties if $U$ contains $U(N)$ for $N>3$, see \cite[Theorem 2.2.2]{Pa}. Even as a Deligne--Mumford stack, $\mathcal{X}_U$
has an \'{e}tale cover by schemes, i.e., for any open subgroup $U'\subset U$ sufficiently small,
the morphism $\mathcal{X}_{U'}\longrightarrow\mathcal{X}_U$ is \'{e}tale over
$\ZZ[\frac{1}{n(U')}]$ with $\mathcal{X}_{U'}$ 
a scheme, so it is sufficient to consider most of our arguments and
terminologies for the case of scheme via \'{e}tale descent.
Furthermore, denote by $\mathcal{X}_U^{\sm}$ the smooth locus of $\mathcal{X}_U$. Due to the fact that $\mathcal{X}_U$ is flat and a relative local complete intersection over $\ZZn$, $\mathcal{X}_U$ is normal and Cohen-Macaulay, and
the non-smooth locus $\mathcal{X}_U^{\sing}:=\mathcal{X}_U\backslash\mathcal{X}_U^{\sm}$ has codimension 2. This will be useful for our later discussion about locally free sheaves.

\subsection{Twisted Hilbert modular variety}
Now we introduce another kind of PEL Shimura varieties, the twisted Hilbert modular variety. 

In this case, let $F$ be a totally 
real field of degree $g$ over $\mathbb{Q}$, $B/F$
be an indefinite quaternion algebra, i.e., for any archimedean place $v$, $B\otimes F_v$ is 
split, and $V=B$ as a left $B$-module of rank 1. 
There is a canonical involution $*$ on $B$.
Denote by $\mathfrak{d}_B$ the discriminant of $B$ as an ideal in $\mathcal{O}_F$, which is the product
of all primes $\mathfrak{p}\subset F$ that are ramified in $B$.  In general, $\mathfrak{d}_B$ is not a principal ideal,
but in our discussion, we always assume that $\mathfrak{d}_B$ is generated by one element $d_B$ in $\mathcal{O}_F$. 
This will simplify our discussion. We remark that when $\mathfrak{d}_B$ is not a principal ideal,
the polarization part in PEL modular interpretation is not a principal polarization, making
things more difficult, but our main result still holds.
For convenience, in our paper we always use $d_B$ for both the $F$-ideal and
the corresponding element in $\mathcal{O}_F$.
We refer to \cite{Voi} for useful properties of quaternions algebras.
Throughout this paper, we always fix a set of isomorphisms
\begin{equation*}
    \sigma_i: B\otimes_F \mathbb{R}\cong M_2(\mathbb{R}),\quad i=1,...,g,
\end{equation*}
one for each archimedean place of $F$.
Note that we can give $B$ a natural symplectic structure using the reduced trace $\tr:B\longrightarrow F$. More precisely, fix $a\in B$ which is a pure quaternion, i.e., $a^2\in F$
but $a\notin F$, then we choose $(V,\psi)=(B,\tr_{F/\mathbb{Q}}\circ \tr(auv^*))$.
It is routine to check that this construction indeed provides us with a $(B,*)$-symplectic module structure on $V$. 

With these settings, by the definition of $(G,X)$, we conclude immediately that $G(\mathbb{Q})\subset B^\times$ consists of those $b\in B^\times$ with
$q(b)\in\QQ$,
while $G'(\mathbb{Q})=B_1$, where $B_1=\{b\in B|q(B)=1\}$ for the reduced norm $q:B\longrightarrow F$. We also use $B^\times_+=\{b\in B|q(b)>0\}$, where $q(b)>0$ means for any
archimedean place $v$, $q(b)_v>0$. We call this kind of $b$ totally positive.
Furthermore, in this case we have $X^+=\mathcal{H}^g$, since
$\sigma:B\otimes_\mathbb{Q}\mathbb{R}\cong M_2(\mathbb{R})^g$.

Note that the reflex field of a twisted Hilbert modular variety is always $\mathbb{Q}$. 
Now in order to have a canonical integral model, let $\mathcal{O}_F$ be the ring of integer in $F$,
we fix a maximal order $\mathcal{O}_B$ of $B$ throughout the paper. Since in our case $V=B$, $\mathcal{O}_B$ itself becomes a lattice in $B$. But we need to modify such a lattice since $\psi$
may not be unimodular when restricting to $\mathcal{O}_B$. Denote by $\mathfrak{D}_F$, 
$\mathfrak{D}_F^{-1}$ and $d_F$ the different, codifferent and discriminant of $\mathcal{O}_F/\mathbb{Z}$ respectively. If $\tr(auv^*)$ has image $\mathfrak{l}\subset F$, where
$\mathfrak{l}$ is an $F$-fractional ideal, then we set $\Lambda:=\mathfrak{l}^{-1}\mathfrak{D}_F^{-1}\otimes_{\mathcal{O}_F}\mathcal{O}_B$ to be the
desired lattice. It is then obvious to check that $\psi$ is unimodular when restricting to $\Lambda$.
Hence we can still denote $\hat{\mathcal{O}}_B=\mathcal{O}_B\otimes_\mathbb{Z}\hat{\mathbb{Z}}$ , while $\mathcal{O}_{B,p}=\mathcal{O}_B\otimes_\mathbb{Z}\mathbb{Z}_p$ for any prime $p$. We further have
$\mathcal{O}_{B,\mathfrak{p}}=\mathcal{O}_B\otimes_{\mathcal{O}_F}\mathcal{O}_{F,\mathfrak{p}}$
for any prime $\mathfrak{p}\subset F$ over $p$. Then by classical algebraic number theory we have
$\mathcal{O}_{B,p}=\mathcal{O}_B\otimes\prod_{\mathfrak{p}|p}\mathcal{O}_{F,\mathfrak{p}}$.

Let $U=\prod_p U_p\subset \hat{\mathcal{O}}_B^\times$ be an open compact subgroup, then define
\begin{equation*}
    n:=n(U)=\prod_{p:U_p\ne \mathcal{O}_{B,p}^\times}p
\end{equation*}
to be the product of primes $p$ such that $U_p$ is not maximal.
By assumption $U$ contains $U(N)$ for some positive integer $N$, where
\begin{equation*}
    U(N):=\prod_{p\nmid N}\mathcal{O}_{B,p}^\times\times\prod_{p|N}(1+N\mathcal{O}_{B,p})^\times.
\end{equation*}
Note this definition agrees with the previous one.

Thus we conclude that in this case, the twisted Hilbert modular variety is
\begin{equation*}
    X_U:=B^\times\backslash \mathcal{H}^{g,\pm}\times B_{\mathbb{A}_f}^\times/U=B^\times_+\backslash \mathcal{H}^g \times B_{\mathbb{A}_f}^\times/U.
\end{equation*}
Here $B^\times$ acts on $\mathcal{H}^{g,\pm}$ via $\sigma=(\sigma_1,...,\sigma_g)$.

As our discussion about general cases above, $X_U$ has a canonical integral model $\mathcal{X}_U$ over $\mathbb{Z}[\frac{1}{n}]$ which is a stack for $n=n(U)$ as above.  For any
$\mathbb{Z}[\frac{1}{n}]$-scheme $S$, $\mathcal{X}_U(S)$ represents the modular problem
of triples $(A,i,\bar{\eta})$ as follows:\\
(1) $A$ is an abelian scheme over $S$ with pure relative dimension $2g$;\\
(3) $i:\mathcal{O}_B\longrightarrow\End_S(A)$ is a ring homomorphism with the determinant condition
$\det_{\Lie(A/S)}(i(b))=\det(q(b))$;\\
(4) $\bar{\eta}$ is a $U$-level structure on $A$.

Note that $\det(q(b))$ is defined as the image of $q(b)$ under composition of canonical isomorphism
$F\otimes_\mathbb{Q}\mathbb{R}\cong\mathbb{R}^g$ and product map $\mathbb{R}^g\longrightarrow\mathbb{R}$. 
An important point is that although we did not state any $O_F$-linear
polarization in the moduli problem, polarization turns out to be automatic. In fact, fix an element
$\mu\in\mathcal{O}_B$ throughout this paper
such that $\mu^2\in\mathcal{O}_F$ with $(\mu^2)=d_B$ as an ideal, and we require $\mu^2$ to be a totally negative element, i.e., under each embedding to $\mathbb{R}$ by $\sigma_i$, the image of  $\mu^2$ is always negative. This gives a positive involution
\begin{equation*}
    B\longrightarrow B,\quad \bar{\beta}:=\mu^{-1}\beta^*\mu,\quad \beta\in B.
\end{equation*}
Clearly this involution stabilizes $\mathcal{O}_B$, and there exists a unique principal polarization
$\lambda$ whose Rosati involution on $\End_S(A)$ is compatible with this positive involution via $i$.
We refer to \cite{Yuan2} for details.

One remark is that although in this paper, we only deal with the case that $V=B$, one can
follow the method in this paper to obtain the same kind of result in the case that 
$V=B\otimes V'$ for some symplectic space $V'/F$. This makes the name twisted Hilbert
modular variety correspond to the name Hilbert modular variety more natural.

One more thing we remark is that, since there is a natural embedding $\mathcal{O}_F\hookrightarrow\mathcal{O}_B$, (3) implies an $\mathcal{O}_F$ action on $A$. This will be
useful in our later discussion.

\section{Kodaira--Spencer map and its image over the integral model}\label{sec KS}
In this section, we review the definition of the Kodaira--Spencer map, and construct a morphism between two line bundles after some modification. Then we give the statement of our main theorems, and prove the first part
of our main theorems. 

\subsection{Kodaira--Spencer map}\label{defks}
Following all the notations about PEL shimura variety above, denote
the universal abelian scheme by $\pi:\mathcal{A}\longrightarrow\mathcal{X}_U$ with 
$\lambda:\mathcal{A}\longrightarrow\mathcal{A}^t$ the universal principal polarization. Also
denote by $\epsilon:\mathcal{X}_U\longrightarrow\mathcal{A}$ and $\epsilon^t:\mathcal{X}_U\longrightarrow\mathcal{A}^t$ the identity sections.

Let $\Omega_{\mathcal{A}/\mathbb{Z}[\frac{1}{n}]}$, $\Omega_{\mathcal{A}/\mathcal{X}_U}$
and $\Omega_{\mathcal{X}_U/\mathbb{Z}[\frac{1}{n}]}$ be the relative differential sheaves,
$\omega_{\mathcal{A}/\mathcal{X}_U}$ and $\omega_{\mathcal{X}_U/\mathbb{Z}[\frac{1}{n}]}$ be
the relative dualizing sheaves, $T_{\mathcal{A}/\mathcal{X}_U}$ and $T_{\mathcal{X}^\sm_U/\mathbb{Z}[\frac{1}{n}]}$ be the relative tangent sheaves. Furthermore,
we have the Lie algebra
\begin{equation*}
    \Lie(\mathcal{A}):=\epsilon^*T_{\mathcal{A}/\mathcal{X}_U}\cong \pi_*T_{\mathcal{A}/\mathcal{X}_U},
\end{equation*}
and the Hodge bundles
\begin{equation*}
    \underline{\Omega}_{\mathcal{A}}:=\epsilon^*\Omega_{\mathcal{A}/\mathcal{X}_U}\cong \pi_*\Omega_{\mathcal{A}/\mathcal{X}_U},
    \underline{\omega}_{\mathcal{A}}:=\epsilon^*\omega_{\mathcal{A}/\mathcal{X}_U}\cong \pi_*\omega_{\mathcal{A}/\mathcal{X}_U}.
\end{equation*}
These definitions clearly imply canonical isomorphisms
\begin{equation*}
    \underline{\Omega}_{\mathcal{A}}\cong \Lie(\mathcal{A})^{\vee},
    \underline{\omega}_{\mathcal{A}}\cong \det\underline{\Omega}_{\mathcal{A}}.
\end{equation*}

In fact, the Hodge bundle $\underline{\omega}_{\mathcal{A}}$ on $\mathcal{X}_U$ has an important
metric called the Faltings metric $\lVert\cdot\rVert_{\Fal}$ as follows. 
Define $l=gr$ and $l=2g$ for the Hilbert modular varieties and the twisted Hilbert modular varieties
respectively. For any point
$x\in\mathcal{X}_U(\mathbb{C})$, let $\alpha\in\underline{\omega}_{\mathcal{A}}(x)\cong\Gamma(\mathcal{A}_x,\omega_{\mathcal{A}_x/\mathbb{C}})$, where $\mathcal{A}_x$ is the fiber of $\mathcal{A}$ above $x$ which is a dimension $l$ complex abelian variety, then $\alpha$ is a holomorphic $l$-form. The Faltings metric is defined by
\begin{equation*}
    \lVert\alpha\rVert_{\Fal}^2:=\frac{1}{(2\pi)^l}\left|\int_{\mathcal{A}_x(\mathbb{C})}\alpha\wedge\bar{\alpha}\right|.
\end{equation*}
See \cite{Yuan1} about this canonical hermitian metric for example. 

Meanwhile, as we remarked before, $\mathcal{X}_U$ is a local complete intersection, with non-smooth locus 
codimension 2.
Hence the dualizing sheaf $\omega_{\mathcal{X}_U/\mathbb{Z}[\frac{1}{n}]}$ is a line bundle on 
$\mathcal{X}_U$, which is canonically isomorphic to 
$\det(\Omega_{\mathcal{X}_U/\mathbb{Z}[\frac{1}{n}]})$. In general, we can
endow $\omega_{\mathcal{X}_U/\mathbb{Z}[\frac{1}{n}]}$ on $\mathcal{X}_U$ with the Petersson
metric $\lVert\cdot\rVert_{\Pet}$, and we define this metric in two cases respectively.

In the case of twisted Hilbert modular variety, $\dim\XU=gr(r+1)/2$ and we can choose 
$(Z_1,...,Z_g)$ to be the coordinates of $\HH_r^g$, where each $Z_i$ is a symmetric complex matrix
with positive definite imaginary part $Y_i$. Now denote by 
$\de\tau_i:=\wedge^{j\ge k} \de Z_{i,jk}$, where $Z_{i,jk}$ is the $(j,k)$-element in matrix
$Z_i$ for $1\le i\le g$. Then clearly $\de\tau:=\wedge_{i=1}^g\de\tau_i$ is a nowhere-vanishing
section of $\omega_{\mathcal{X}_U/\mathbb{Z}[\frac{1}{n}]}$, and the Petersson metric is 
defined by 
\begin{equation*}
    \lVert\de\tau\rVert_{\Pet}:=2^{\frac{gr(r+1)}{2}}\prod_{i=1}^g\det(Y_i)^{\frac{r+1}{2}}.
\end{equation*}
Note that in the case of $g=1$, this defines the Petersson metric of dualizing sheaf over the 
Siegel modular variety. 

In the case of the twisted Hilbert modular variety, $\dim\XU=g$
and we can denote by $(\tau_1,...,\tau_g)$ the usual coordinate of $\mathcal{H}^g\subset\mathbb{C}^g$, Then the Petersson metric is defined by
\begin{equation*}
    \lVert\de\tau\rVert_{\Pet}:=2^{g}\prod_{i=1}^g\Img(\tau_i).
\end{equation*}
Here $\Img$ means to take the imaginary part.
Note that for $g=1$, this definition is the same as the Petersson metric defined on dualizing sheaf
of Shimura curve, see \cite{Yuan2} for example.

Now we are ready to introduce the Kodaira--Spencer map as follows. 
Note that the following discussion holds in general case. Consider the exact sequence
\begin{equation*}
    0\longrightarrow\pi^*\Omega_{\mathcal{X}_U/\mathbb{Z}[\frac{1}{n}]}\longrightarrow\Omega_{\mathcal{A}/\mathbb{Z}[\frac{1}{n}]}\longrightarrow\Omega_{\mathcal{A}/\mathcal{X}_U}\longrightarrow 0.
\end{equation*}
Apply derived functors of $\pi_*$, it gives a connecting morphism
\begin{equation*}
    \phi_0:\pi_*\Omega_{\mathcal{A}/\mathcal{X}_U}\longrightarrow R^1\pi_*(\pi^*\Omega_{\mathcal{X}_U/\mathbb{Z}[\frac{1}{n}]}).
\end{equation*}
We call this the Kodaira--Spencer map. Note that there are canonical isomorphisms
\begin{equation*}
    R^1\pi_*(\pi^*\Omega_{\mathcal{X}_U/\mathbb{Z}[\frac{1}{n}]})\longrightarrow R^1\pi_*\mathcal{O}_{\mathcal{A}}\otimes\Omega_{\mathcal{X}_U/\mathbb{Z}[\frac{1}{n}]}\longrightarrow\Lie(\mathcal{A}^t)\otimes\Omega_{\mathcal{X}_U/\mathbb{Z}[\frac{1}{n}]}\longrightarrow\underline{\Omega}_{\mathcal{A}^t}^\vee\otimes\Omega_{\mathcal{X}_U/\mathbb{Z}[\frac{1}{n}]},
\end{equation*}
where $\mathcal{A}^t$ denotes the dual universal abelian scheme. Here I give some explanation.
The first isomorphism holds by projection formula, the second isomorphism is just
$R^1\pi_*\mathcal{O}_{\mathcal{A}}\cong\pi_*^t T_{\mathcal{A}^t/\mathcal{X}_U}$, which follows
from deformation theory, see \cite[page 37]{Mil} for example.
The third one is just our definition. Then the Kodaira--Spencer map can be also written as
\begin{equation}\label{phi1}
    \phi_1:\underline{\Omega}_{\mathcal{A}}\longrightarrow\underline{\Omega}_{\mathcal{A}^t}^\vee\otimes\Omega_{\mathcal{X}_U/\mathbb{Z}[\frac{1}{n}]}.
\end{equation}

\subsection{Isomorphism from deformation theory}\label{construct}
In this subsection we introduce an isomorphism from deformation theory. Taking dualization of $\phi_1$ one has
\begin{equation}
    \phi_2:\underline{\Omega}_{\mathcal{A}}\otimes\underline{\Omega}_{\mathcal{A}^t}\longrightarrow\Omega_{\mathcal{X}_U/\mathbb{Z}[\frac{1}{n}]}.
\end{equation}
Recall that $\mathcal{X}_U^{\sm}$ means the smooth locus of $\mathcal{X}_U$ as above, with non-smooth
locus of codimension 2 in a suitable sense. By deformation theory, we have a crucial theorem
which is valid for general PEL Shimura varieties. 

\begin{theorem} \label{deformation}
There is an isomorphism
\begin{equation}
    \phi_3:(\underline{\Omega}_{\mathcal{A}}\otimes\underline{\Omega}_{\mathcal{A}})|_{\mathcal{X}_U^{\sm}}/\mathcal{R}\longrightarrow\Omega_{\mathcal{X}_U^{\sm}/\mathbb{Z}[\frac{1}{n}]}.
\end{equation}
Here $\mathcal{R}$ is the subsheaf of $(\underline{\Omega}_{\mathcal{A}}\otimes\underline{\Omega}_{\mathcal{A}})|_{\mathcal{X}_U^{\sm}}$
locally generated by
\begin{equation*}
    (i(\beta)^*u)\otimes v-u\otimes(i(\beta)^*v),
\end{equation*}
\begin{equation*}
    (\lambda^*(u^t)\otimes v-\lambda^*(v^t)\otimes u),\quad 
    u,v\in\underline{\Omega}_{\mathcal{A}},\beta\in\mathcal{O}_B
\end{equation*}
where $i$ is the endomorphism and $\lambda$ is the principal polarization.
\end{theorem}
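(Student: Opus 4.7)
The plan is to exhibit $\phi_3$ by turning the Kodaira--Spencer map $\phi_2$ into a map whose source is $\underline{\Omega}_{\mathcal{A}}\otimes\underline{\Omega}_{\mathcal{A}}$, then to check that the two listed families of sections lie in the kernel, and finally to verify bijectivity on the smooth locus using the deformation theory of polarized abelian schemes with endomorphisms. Concretely, the principal polarization $\lambda:\mathcal{A}\to\mathcal{A}^t$ induces an isomorphism $\lambda^*:\underline{\Omega}_{\mathcal{A}^t}\xrightarrow{\sim}\underline{\Omega}_{\mathcal{A}}$, and composing $\phi_2$ with $\id\otimes(\lambda^*)^{-1}$ yields a morphism $\tilde\phi:\underline{\Omega}_{\mathcal{A}}\otimes\underline{\Omega}_{\mathcal{A}}\to\Omega_{\mathcal{X}_U/\ZZn}$ defined on all of $\mathcal{X}_U$, which I will take as the putative $\phi_3$ after passage to the smooth locus and the quotient by $\mathcal{R}$.

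Next I would verify that $\tilde\phi$ kills $\mathcal{R}$. For the first relation, the Kodaira--Spencer map is functorial in endomorphisms: $i(\beta):\mathcal{A}\to\mathcal{A}$ induces, via pullback on the short exact sequence of differentials, compatible actions $i(\beta)^*$ on $\underline{\Omega}_{\mathcal{A}}$ and $i(\beta^*)^*$ on $\underline{\Omega}_{\mathcal{A}^t}$; once transported through $\lambda^*$ and the Rosati involution, the two actions on the two tensor factors agree, so the tensor relation $(i(\beta)^*u)\otimes v=u\otimes(i(\beta)^*v)$ holds in the image. For the symmetry relation, the key input is that the first de Rham cohomology $H^1_{\dR}(\mathcal{A}/\mathcal{X}_U)$ carries a nondegenerate alternating pairing induced by $\lambda$ under which the Hodge filtration $\underline{\Omega}_{\mathcal{A}}\subset H^1_{\dR}$ is isotropic; this self-duality implies that the pairing $\underline{\Omega}_{\mathcal{A}}\otimes\underline{\Omega}_{\mathcal{A}}\to\Omega_{\mathcal{X}_U/\ZZn}$ obtained from $\tilde\phi$ is symmetric, which is precisely the relation $\lambda^*(u^t)\otimes v-\lambda^*(v^t)\otimes u$.

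To prove bijectivity on $\mathcal{X}_U^{\sm}$, I would invoke the Grothendieck--Messing (equivalently, Serre--Tate) description of the tangent space to the PEL moduli problem. At any point, the first-order deformations of the abelian scheme with its $\mathcal{O}_B$-action and polarization correspond to lifts of the Hodge filtration $\underline{\Omega}_{\mathcal{A}}\subset H^1_{\dR}$, subject to being $\mathcal{O}_B$-stable and isotropic for the $\lambda$-pairing. Such lifts are parametrized exactly by $\mathcal{O}_B$-linear symmetric homomorphisms $\underline{\Omega}_{\mathcal{A}}\to\underline{\Omega}_{\mathcal{A}}^{\vee}\cong\Lie(\mathcal{A})$, whose dual is the quotient in the statement; the abstract Kodaira--Spencer isomorphism from deformation theory coincides with $\tilde\phi$ because both are produced from the same connecting homomorphism on $\Omega_{\mathcal{A}/\ZZn}$. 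Since on $\mathcal{X}_U^{\sm}$ the source and target of $\phi_3$ are locally free of the same rank and the map is an isomorphism at every geometric point, it is an isomorphism of sheaves.

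The main obstacle will be the symmetry relation: one must keep careful track of how the polarization identifies $\underline{\Omega}_{\mathcal{A}^t}$ with $\underline{\Omega}_{\mathcal{A}}$, how the transpose $u\mapsto u^t$ interacts with $\lambda^*$, and how these combine with the Rosati involution on $\End(\mathcal{A})$ so that the two relations in $\mathcal{R}$ are exactly the right ones (no more, no less). Matching local rank counts between $\underline{\Omega}_{\mathcal{A}}\otimes\underline{\Omega}_{\mathcal{A}}/\mathcal{R}$ and $\Omega_{\mathcal{X}_U^{\sm}/\ZZn}$ in each of the Hilbert--Siegel and twisted cases is a concrete check that forces the relations to be stated in exactly the above form and serves as a consistency test for the sign conventions chosen above.
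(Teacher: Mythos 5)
Your proposal is correct and matches the paper's approach: both construct $\phi_3$ by composing the Kodaira--Spencer pairing $\phi_2$ with the identification $\lambda^*:\underline{\Omega}_{\mathcal{A}^t}\xrightarrow{\sim}\underline{\Omega}_{\mathcal{A}}$ furnished by the principal polarization, and both establish that it descends to an isomorphism on the quotient by $\mathcal{R}$ using the deformation-theoretic description of the tangent space to the PEL moduli problem. The only difference in presentation is that the paper cites this last input as a black box (Lan, Proposition 2.3.5.2), whereas you unpack essentially the same statement via Grothendieck--Messing and verify the kernel relations by hand; either route is fine, and your remark that the sign and transpose bookkeeping in the symmetry relation is the delicate point is exactly where care is needed when carrying Lan's $\phi_4$ (stated for $\underline{\Omega}_{\mathcal{A}}\otimes\underline{\Omega}_{\mathcal{A}^t}$) across $\lambda^*$.
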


\begin{proof}
We refer to \cite[Proposition 2.3.5.2]{Lan} for this beautiful fact.
Note that we only need the result over smooth locus.
From there we have an isomorphism induced from $\phi_2$
\begin{equation}
    \phi_4:(\underline{\Omega}_{\mathcal{A}}\otimes\underline{\Omega}_{\mathcal{A}^t})|_{\mathcal{X}_U^{\sm}}/\mathcal{R}\longrightarrow\Omega_{\mathcal{X}_U^{\sm}/\mathbb{Z}[\frac{1}{n}]}.
\end{equation}
Here $\mathcal{R}$ is the subsheaf of $(\underline{\Omega}_{\mathcal{A}}\otimes\underline{\Omega}_{\mathcal{A}^t})|_{\mathcal{X}_U^{\sm}}$
locally generated by
\begin{equation*}
    (i(\beta)^*u)\otimes v-u\otimes((i(\beta)^t)^*v),
\end{equation*}
\begin{equation*}
    (\lambda^*(u^t)\otimes v-\lambda^*(v)\otimes u^t),\quad u\in\underline{\Omega}_\mathcal{A},
    v\in\underline{\Omega}_{\mathcal{A}^t},\beta\in\mathcal{O}_B.
\end{equation*}
Then our theorem is a straight forward corollary after taking the principal polarization.
\end{proof}

\subsection{Morphism between line bundles}
In this subsection we will derive the morphism in our main theorems.
In order to obtain a morphism between line bundles $\underline{\omega}_\mathcal{A}$ and
$\omega_{\mathcal{X}_U/\mathbb{Z}[\frac{1}{n}]}$, one observation is that since we have embedding
$\mathcal{O}_F\hookrightarrow\mathcal{O}_B$, there is also an $\mathcal{O}_F$-action.
This also implies that there is a natural $\mathcal{O}_F\otimes_\ZZ\mathcal{O}_{\mathcal{X}^\sm}$-module structure of
$\OA$, $\WA$, $\OU$ and $\WU$. In the later discussion, we always use the
notation $\otf$ for the tensor product of two coherent sheaves
over $\mathcal{O}_F\otimes_\ZZ\mathcal{O}_{\mathcal{X}^\sm}$. For simplicity,
we still use $\otimes$ for $\otimes_{\mathcal{O}_{\mathcal{X}^\sm}}$, and
$\det$ for determinant over $\mathcal{O}_{\mathcal{X}^\sm}$. Also note that
since $\XXU$ is a local complete intersection, morphisms between line bundles 
over $\mathcal{X}_U^\sm$ can
also be extended to morphisms over $\XXU$.

Now we define an operator $\df$ for any coherent sheaf over
$\mathcal{O}_F\otimes_\ZZ\mathcal{O}_{\mathcal{X}^\sm}$. 
First, for any locally free sheaf $\mathcal{F}$ over $\mathcal{O}_F\otimes_\ZZ\mathcal{O}_{\mathcal{X}^\sm}$, 
 $\df(\mathcal{F})$
is defined by taking top exterior power over $\mathcal{O}_F\otimes_\ZZ\mathcal{O}_{\mathcal{X}^\sm}$, which is still a locally free sheaf. In general, for any coherent sheaf $\mathcal{F}$ over $\mathcal{O}_F\otimes_\ZZ\mathcal{O}_{\mathcal{X}^\sm}$, we can always find a
resolution 
\begin{equation*}
    0\longrightarrow\mathcal{G}_m\longrightarrow ...\longrightarrow\mathcal{G}_0\longrightarrow\mathcal{F}\longrightarrow 0,
\end{equation*}
where $\mathcal{G}_i$ are all locally free sheaves over
$\mathcal{O}_F\otimes_\ZZ\mathcal{O}_{\mathcal{X}^\sm}$ for $i=1,...,m$,
and this is a long exact sequence for $\mathcal{O}_F\otimes_\ZZ\mathcal{O}_{\mathcal{X}^\sm}$-coherent sheaves.
Note that $\mathcal{O}_F\otimes_\ZZ\mathcal{O}_{\mathcal{X}^\sm}$ is regular, 
since $\mathcal{O}_F$ is regular and we work on the smooth locus of $\XXU$.
Then, the finiteness of the above resolution is implied by the finiteness of 
projective dimension, which follows from regularity.

 We then define
\begin{equation*}
    \df(\mathcal{F})=\otimes_{i=0}^{*\ m}\df(\mathcal{G}_i)^{(-1)^i}.
\end{equation*}
Here $\df(\mathcal{G})^{-1}$ means the dual of $\df(\mathcal{F})$. Obviously,
this definition is independent of the choice of resolution.

Unlike $\det(\mathcal{F})$, which is a line bundle, $\df(\mathcal{F})$
has rank $g=\deg(F/\QQ)$. In general, $\df$ has
many similar properties as $\det$, and the proofs are elementary. We list some properties,
\begin{lemma}\label{det}
    $\mathcal{F}$ is a coherent sheaf over
$\mathcal{O}_F\otimes_\ZZ\mathcal{O}_{\mathcal{X}^\sm}$, then there is a canonical isomorphism
\begin{equation*}
    \det(\mathcal{F})\cong\det(\df(\mathcal{F})).
\end{equation*}
Moreover, if $\mathcal{F}$ is a rank 1 locally free sheaf over
$\mathcal{O}_F\otimes_\ZZ\mathcal{O}_{\mathcal{X}^\sm}$, for any morphism of sheaves 
$\mathcal{M}\longrightarrow\mathcal{N}\otf\mathcal{F}$, $\mathcal{M}$ and $\mathcal{N}$ 
are locally free sheaves on the same scheme which are rank $r$ over
$\mathcal{O}_F\otimes_\ZZ\mathcal{O}_{\mathcal{X}^\sm}$,
then there is a natural morphism 
\begin{equation*}
    \df(\mathcal{M})\longrightarrow\df(\mathcal{N})\otf
    \mathcal{F}^{\otf r}.
\end{equation*}
Here $\mathcal{F}^{\otf r}$ means $\mathcal{F}$ tensored with itself over
$\mathcal{O}_F\otimes_\ZZ\mathcal{O}_{\mathcal{X}^\sm}$ for $r$-times.
\end{lemma}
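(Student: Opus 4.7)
The plan is to handle both assertions by reducing to the locally free case. Set $R := \mathcal{O}_F \otimes_\ZZ \mathcal{O}_{\mathcal{X}_U^\sm}$, so that étale-locally on $\mathcal{X}_U^\sm$ the ring $R$ is a finite free extension of $\mathcal{O}_{\mathcal{X}_U^\sm}$ of rank $g$. Since $n$ is invertible and $\mathcal{O}_{\mathcal{X}_U^\sm}$ is regular, so is $R$, which guarantees the existence of the bounded locally free resolutions appearing in the definition of $\df$, and the independence of the definition from the choice of resolution, by the finiteness of projective dimension together with the standard alternating-product formalism for the determinant functor.

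For the first assertion, the multiplicativity of $\det$ in short exact sequences reduces the claim to the case where $\mathcal{F}$ is locally free of rank $r$ over $R$. Étale-locally, a choice of $\mathcal{O}_{\mathcal{X}_U^\sm}$-basis of $R$ together with an $R$-basis of $\mathcal{F}$ trivializes both $\det(\mathcal{F})$ and $\det(\df(\mathcal{F}))$ canonically, producing a local identification. I would then check that a change of $R$-basis of $\mathcal{F}$ by a matrix $A \in \GL_r(R)$ rescales the generator of $\det(\mathcal{F})$ by $\det_{\mathcal{O}_{\mathcal{X}_U^\sm}}(A)$ and the generator of $\det(\df(\mathcal{F}))$ by $N_{R/\mathcal{O}_{\mathcal{X}_U^\sm}}(\det_R(A))$. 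These two quantities coincide by the classical restriction-of-scalars determinant identity, so the étale-local isomorphisms descend to a well-defined canonical global one.

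For the second assertion, $\df(\mathcal{M})$ and $\df(\mathcal{N})$ are genuine line bundles over $R$. Applying $\wedge^r_R$ to the $R$-linear morphism $\mathcal{M} \to \mathcal{N} \otf \mathcal{F}$ yields
\begin{equation*}
    \df(\mathcal{M}) \longrightarrow \wedge^r_R(\mathcal{N} \otf \mathcal{F}) \cong \df(\mathcal{N}) \otf \mathcal{F}^{\otf r},
\end{equation*}
where the last isomorphism is the standard formula for the top exterior power of a twist by a rank-one locally free module, proved locally by picking a trivialization of $\mathcal{F}$.

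The main obstacle is the first part: abstractly both sides are rank-one line bundles, and the content of the assertion is that they are canonically isomorphic, with no norm character or discriminant twist appearing. This is exactly the classical identity $\det_{\mathcal{O}_{\mathcal{X}_U^\sm}}(\varphi) = N_{R/\mathcal{O}_{\mathcal{X}_U^\sm}}(\det_R(\varphi))$ for $R$-linear endomorphisms $\varphi$ of a free $R$-module, combined with the observation that on a rank-one $R$-module $\mathcal{L}$ the norm $N_{R/\mathcal{O}_{\mathcal{X}_U^\sm}}(\mathcal{L})$ agrees with $\det_{\mathcal{O}_{\mathcal{X}_U^\sm}}(\mathcal{L})$. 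Everything else---extension from locally free to coherent sheaves, independence of the chosen resolution, and compatibility of the induced map with the $\otf$-formalism---is routine homological bookkeeping.
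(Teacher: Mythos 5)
Your argument is correct and takes the same route as the paper, which simply reduces to the locally free case and declares "everything is covered in linear algebra"; you supply that linear algebra explicitly via the identity $\det_{\mathcal{O}_{\mathcal{X}^\sm}}(A)=N_{R/\mathcal{O}_{\mathcal{X}^\sm}}(\det_R(A))$ and the twist formula for top exterior powers. One point worth making explicit (which neither you nor the paper states): the local identification of $\det(\mathcal{F})$ with $\det(\df(\mathcal{F}))$ requires a trivialization of $\det_{\mathcal{O}_{\mathcal{X}^\sm}}(R)$, since intrinsically these two line bundles differ by $\det_{\mathcal{O}_{\mathcal{X}^\sm}}(R)^{\otimes(r-1)}$; you are implicitly using the global trivialization coming from a fixed $\ZZ$-basis of $\mathcal{O}_F$, so the "canonical" isomorphism is canonical only up to a factor $\det(B)^{r-1}$ with $B\in\GL_g(\ZZ)$, i.e.\ a sign when $r$ is even, which is harmless for the paper's image and metric computations.
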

Note that it is sufficient to prove the case when $\mathcal{F}$ is locally free
over $\mathcal{O}_F\otimes_\ZZ\mathcal{O}_{\mathcal{X}^\sm}$ by definition.
Thus everything is covered in linear algebra.

In the case of Hilbert modular variety, we claim that
\begin{equation*}
    \det((\underline{\Omega}_{\mathcal{A}}\otimes
    \underline{\Omega}_{\mathcal{A}})/\mathcal{R})\cong
    \WA^{\otimes r+1}.
\end{equation*}
To prove the claim, note that in this case $\mathcal{O}_B=\mathcal{O}_F$, hence taking the quotient of
$\underline{\Omega}_\mathcal{A}\otimes\underline{\Omega}_\mathcal{A}$ by
the subsheaf generated by $(i(\beta)^*u)\otimes v-u\otimes(i(\beta)^*v)$ is isomorphic to 
$\underline{\Omega}_\mathcal{A}\otf\underline{\Omega}_\mathcal{A}$.
It remains to check $\det(\underline{\Omega}_\mathcal{A}\otf\underline{\Omega}_\mathcal{A}/\mathcal{R}')$, where $\mathcal{R}'$ is
 the subsheaf generated by $(\lambda^*(u^t)\otimes v-\lambda^*(v^t)\otimes u)$.
Then we have
\begin{equation*}
    \df(\det(\underline{\Omega}_\mathcal{A}\otf\underline{\Omega}_\mathcal{A}/\mathcal{R}'))\cong(\df\OA)^{\otf r+1}.
\end{equation*}
Indeed, this result holds by linear algebra when $\OA$ is locally free over
$\mathcal{O}_F\otimes_\ZZ\mathcal{O}_{\mathcal{X}^\sm}$, hence by our definition
of $\df$, this holds in general.

Applying Lemma \ref{det}, we then conclude that there is a natural isomorphism
\begin{equation*}
    \WA^{\otimes r+1}\longrightarrow\WU.
\end{equation*}

In the case of twisted Hilbert modular variety, it is even simpler to construct such morphism.
Recall the Kodaira--Spencer map \eqref{phi1}, apply Lemma \ref{det} again, we then have a morphism
\begin{equation*}    
    \df(\OA)\otf\df(\underline{\Omega}_{\mathcal{A}^t})\longrightarrow
    \OU^{\otf 2}.
\end{equation*}
This morphism has both sides rank $g$ locally free sheaves, hence we can 
take the principal polarization on the left side first, then take determinant
of both sides to obtain a morphism between line bundles, i.e., there is a morphism
\begin{equation*}
    \WA^{\otimes 2}\longrightarrow\WU^{\otimes 2}.
\end{equation*}

Now, we are ready to state the main theorems in this paper. In the case of Hilbert modular 
varieties, we have:
\begin{theorem}\label{main1}
There is a canonical isomorphism
\begin{equation}
    \psi:\underline{\omega}_\mathcal{A}^{\otimes r+1}\longrightarrow\omega_{\mathcal{X}_U/\mathbb{Z}[\frac{1}{n}]}.
\end{equation}    
Moreover, under $\psi$, we have $\lVert\cdot\rVert_{\Fal}^{r+1}=\lVert\cdot\rVert_{\Pet}$.
\end{theorem}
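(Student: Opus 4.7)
The plan is to prove Theorem \ref{main1} in two stages: first establish that the morphism $\psi$ constructed in \S\ref{construct} is actually an isomorphism, and then verify the metric identity via an explicit complex-analytic computation.

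For the first stage, essentially all of the algebraic work has been set up. Starting from the Kodaira--Spencer morphism $\phi_2$ and identifying $\CA$ with $\CA^t$ via the universal principal polarization, Theorem \ref{deformation} provides an \emph{isomorphism}
$$\phi_3 : (\OA \otimes \OA)|_{\CX_U^\sm}/\CR \longrightarrow \OU|_{\CX_U^\sm}.$$
In the Hilbert--Siegel setting $\CO_B = \CO_F$, so the relations defining $\CR$ are exactly those identifying the quotient with the symmetric square $\Sym^2_{\CO_F\otimes\CO_{\CX^\sm}}\OA$ over $\CO_F \otimes \CO_{\CX^\sm}$. A short linear-algebra identity (the top exterior power of a symmetric square over the rank-$r$ module $\OA$) yields $\df(\Sym^2 \OA) \cong (\df \OA)^{\otf (r+1)}$; combining this with Lemma \ref{det} and taking $\det$ of $\phi_3$ over $\CO_{\CX_U^\sm}$ produces
$$\WA^{\otimes(r+1)}|_{\CX_U^\sm} \longrightarrow \WU|_{\CX_U^\sm},$$
which is an isomorphism because $\phi_3$ is and $\det$ preserves isomorphisms. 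Since $\CX_U$ is normal with singular locus of codimension $\geq 2$, this extends uniquely to the desired global isomorphism $\psi$ on all of $\CX_U$.

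For the metric comparison I would pull back everything to the universal cover $\HH_r^g$ of $\CX_U(\CC)$ and work at a point with coordinates $Z=(Z_1,\ldots,Z_g)$, $Z_i = X_i + iY_i$. At such $Z$ the universal abelian variety is $\CC^{rg}/\Lambda_Z$, and the form $\alpha := \bigwedge_{i,j} \de z_{i,j}$ is a nowhere-vanishing local section of $\WA$. A direct integration over $\CA_Z(\CC)$ produces an explicit expression for $\lVert\alpha\rVert_{\Fal}^2$ of the form $(\text{absolute constant}) \cdot \prod_i \det(Y_i)$. On the Petersson side, the normalization $\lVert\de\tau\rVert_{\Pet} = 2^{gr(r+1)/2}\prod_i \det(Y_i)^{(r+1)/2}$ is already explicit, so raising $\lVert\alpha\rVert_{\Fal}^2$ to the $(r+1)$-th power furnishes the matching power of $\prod_i \det(Y_i)$ automatically, reducing the identity to a bookkeeping check on the constant prefactor.

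The main obstacle is to identify $\psi(\alpha^{\otimes(r+1)})$ as a specific scalar multiple of $\de\tau$. Computing this requires an explicit formula for the Kodaira--Spencer connecting map $\phi_0$ in the complex setting, which in turn requires a concrete model for the isomorphism $R^1\pi_*\mathcal{O}_\CA \cong \Lie(\CA^t)$. The strategy, to be developed in \S\ref{sec metric}, is to represent classes in $R^1\pi_*\mathcal{O}_\CA$ via \v{C}ech cocycles on an open cover of the universal family built from translates of a fundamental domain of $\Lambda_Z$, making the boundary map of the Kodaira--Spencer sequence computable directly on the basis $\de z_{i,j}$. Once the resulting scalar is pinned down, the metric identity follows by matching it against the ratio $\lVert\alpha\rVert_{\Fal}^{2(r+1)} / \lVert\de\tau\rVert_{\Pet}^2$; carefully tracking the powers of $2$ and $\pi$ through the symmetric-tensor quotient and the \v{C}ech calculation is the principal technical burden.
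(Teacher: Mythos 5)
Your proposal is correct and follows essentially the same two-stage strategy as the paper: reduce the isomorphism to Theorem \ref{deformation} plus the linear-algebra fact $\df(\Sym^2\OA)\cong(\df\OA)^{\otf(r+1)}$ over $\CO_F\otimes\CO_{\CX^\sm}$ and extend across the codimension-$\geq 2$ singular locus, and then compare metrics via an explicit \v{C}ech computation of the Kodaira--Spencer connecting map over $\HH_r^g$. The paper carries out the \v{C}ech step in Theorem \ref{explicit1}, obtaining the scalar $(2\pi i)^{-r(r+1)/2}$ relating $(\wedge_i\de z_i)^{\otimes(r+1)}$ to $\de\tau$, which is exactly the constant you flag as the remaining bookkeeping.
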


While in the case of twisted Hilbert modular varieties, we also have:
\begin{theorem}\label{main2}
There is a canonical morphism
\begin{equation}
    \psi:\underline{\omega}_\mathcal{A}^{\otimes 2}\longrightarrow\omega_{\mathcal{X}_U/\mathbb{Z}[\frac{1}{n}]}^{\otimes 2},
\end{equation}    
and its image is the subsheaf $d_B^g\WU^{\otimes 2}$.
Moreover, under $\psi$, we have $\lVert\cdot\rVert_{\Fal}^{2}=\lVert\cdot\rVert_{\Pet}^2$.
\end{theorem}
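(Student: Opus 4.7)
The plan is to handle the three assertions of the theorem---existence of the canonical morphism $\psi$, the computation of its image, and the metric identity---in turn, relying on the construction given in Section \ref{construct} together with the explicit local calculations to be carried out in Section \ref{sec metric}.

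For the existence of $\psi$, the map has essentially already been built above: starting from the Kodaira--Spencer map $\phi_1$ of \eqref{phi1}, I dualize to obtain $\phi_2:\OA\otimes\underline{\Omega}_{\mathcal{A}^t}\to\OU$, apply the $\mathcal{O}_F$-twisted determinant operator $\df$ together with Lemma \ref{det} to get a map $\df(\OA)\otf\df(\underline{\Omega}_{\mathcal{A}^t})\to\OU^{\otf 2}$, use the principal polarization $\lambda$ to identify $\df(\underline{\Omega}_{\mathcal{A}^t})$ with $\df(\OA)$, and finally take the ordinary determinant of both rank-$g$ locally free sheaves to land in line bundles. Because $\XXU$ is a local complete intersection whose non-smooth locus has codimension $2$, the map extends from $\mathcal{X}_U^{\sm}$ to all of $\XXU$ by the extension property for morphisms between reflexive sheaves, so we obtain a global $\psi:\WA^{\otimes 2}\to\WU^{\otimes 2}$.

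For the image, I would argue \'etale locally on $\mathcal{X}_U^{\sm}$, where $\OA$ becomes a free module of rank $2$ over $\mathcal{O}_F\otimes_{\ZZ}\mathcal{O}_{\mathcal{X}_U^\sm}$. Theorem \ref{deformation} shows that $\phi_3$ is an isomorphism once one quotients by the relations coming from both the $\mathcal{O}_B$-action and the polarization. The crucial point is that in the twisted setting, the identification $\lambda^*:\underline{\Omega}_{\mathcal{A}^t}\cong \OA$ used when converting $\phi_4$ to $\phi_3$ differs from the naive $\mathcal{O}_F$-linear polarization by the twist $\bar{\beta}=\mu^{-1}\beta^*\mu$, and $\mu$ satisfies $\mu^2\sim d_B$. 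Tracking this twist through the $\mathcal{O}_F$-twisted determinant, each of the $g$ factors of $\mathcal{O}_F\otimes_\ZZ\RR\cong\RR^g$ contributes one factor of $d_B$, so the image of $\psi$ sits inside $\WU^{\otimes 2}$ as the subsheaf $d_B^g\WU^{\otimes 2}$ and is cut out by exactly this discrepancy; the equality (not merely inclusion) is verified locally because $\phi_3$ itself is an isomorphism.

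For the metric comparison, I would pass to the complex uniformization $\mathcal{H}^g\to X_U$ and evaluate at a fixed point $\tau=(\tau_1,\dots,\tau_g)$. On the one hand, the universal abelian variety above $\tau$ is an explicit complex torus $\CC^{2g}/\Lambda_\tau$, a basis of $\WA(\tau)$ is determined by the $\sigma_i$ and by the choice of $\mu$, and a direct Gaussian integral computes the Faltings norm in terms of $\prod_i \Img(\tau_i)$. On the other hand, $\de\tau=\de\tau_1\wedge\cdots\wedge\de\tau_g$ is the standard generator of $\WU$ with explicit Petersson norm $2^g\prod_i\Img(\tau_i)$. It then suffices to trace an explicit basis through the chain (Kodaira--Spencer, dualization, $\df$, polarization, determinant) and read off the proportionality.

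The main obstacle is producing a workable explicit formula for the connecting homomorphism $\phi_0$ at a complex point, since it involves the isomorphism $R^1\pi_*\mathcal{O}_{\mathcal{A}}\cong\Lie(\mathcal{A}^t)$ coming from deformation theory. To make this computable I would use the \v{C}ech-cohomology trick advertised in Section \ref{sec metric} to represent $\phi_0$ by explicit cocycles on a standard open cover of the complex torus; with that representative in hand, verifying the identity $\lVert\cdot\rVert_{\Fal}^2=\lVert\cdot\rVert_{\Pet}^2$ under $\psi$ reduces to elementary linear algebra with the symplectic basis and to a standard Gaussian integral, and the factor $d_B^g$ built into the integral structure is absorbed consistently on both sides because it is already accounted for by the polarization $\mu$.
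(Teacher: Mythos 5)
Your construction of $\psi$ follows the paper's Section~\ref{construct} closely (dualize $\phi_1$, apply $\df$ and Lemma~\ref{det}, invoke the polarization, take determinants, extend across the codimension-$2$ singular locus), and your metric-comparison outline is also on track: the paper likewise uses the \v{C}ech-cocycle representation of $\phi_0$, the explicit uniformization $\mathcal{A}_\tau=\CC^{2g}/\mathcal{O}_B(\tau,1)^t$, and a covolume computation showing $\vol(M_2(\RR)/\sigma_i(\mathcal{O}_B))=\sigma_i(d_B)$ to match $\lVert\cdot\rVert_{\Fal}^2$ against $\lVert\cdot\rVert_{\Pet}^2$.

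The serious gap is in the image computation, where your argument is not correct. You claim that the factor $d_B^g$ appears because ``each of the $g$ factors of $\mathcal{O}_F\otimes_\ZZ\RR\cong\RR^g$ contributes one factor of $d_B$'' coming from the twist $\bar{\beta}=\mu^{-1}\beta^*\mu$. This conflates an archimedean heuristic with what is actually an integral statement about a subsheaf over $\ZZ[\tfrac1n]$: the equality $\mathrm{Im}(\psi)=d_B^g\WU^{\otimes 2}$ must be verified locally at the \emph{non-archimedean} primes, and the factor $d_B$ does not arise uniformly ``one per archimedean embedding.'' The paper's actual proof reduces the image statement to an isomorphism $\mathcal{N}^{\otf 2}\otf\df(\mathcal{T}')\to d_B\df(\mathcal{T})$ (with $\mathcal{N}=\HHom_{\mathcal{O}_B}(\mathcal{T}',\mathcal{T})$) and then checks it prime-by-prime over $\mathcal{O}_F$. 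At a split prime $\mathfrak{p}\nmid d_B$ one uses the idempotent decomposition of $\mathcal{O}_{B,\mathfrak{p}}\cong M_2(\mathcal{O}_{F,\mathfrak{p}})$ to see the map is unramified; at a ramified prime $\mathfrak{p}\mid d_B$ one passes to the unramified quadratic extension, writes $\mathcal{O}_D=\mathcal{O}_{F,\mathfrak{p}^2}\oplus\mathcal{O}_{F,\mathfrak{p}^2}j$ with $j^2=\varpi$, classifies the two $\mathcal{O}_D$-modules satisfying the determinant condition, and computes that $\Hom_{\mathcal{O}_D}(T',T)\otimes_R T'\to T$ has image $e_1T\oplus\varpi e_2T$, giving exactly the factor $\varpi$. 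The exponent $g$ then comes from taking $\det$ over $\mathcal{O}_{\mathcal{X}^{\sm}}$ of the rank-$g$ sheaf $\df(\mathcal{T})$, not from a count of real embeddings. Without this local analysis at the ramified primes, you have not established that the image is exactly $d_B^g\WU^{\otimes 2}$ rather than merely contained in $\WU^{\otimes 2}$.
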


We will prove these two theorems later.

One remark is that for Hilbert modular varieties, when $g=1$, this result is a beautiful
identity on Siegel upper half plane, which is useful in modular form.
While for twisted Hilbert modular varieties, when $g=1$, our result agree with \cite{Yuan2}.

\subsection{Image over the integral model}
In this subsection we prove the first part of our main theorems, i.e, we check the image
over integral models. Note that for Theorem \ref{main1}, the isomorphism over integral model 
is immediately from our process in defining such morphism between line bundles.
It remains to check our first statement in Theorem \ref{main2}.
The key idea follows \cite{Yuan2} is to check our statement at each non-archimedean 
places. Since $\XXU^{\sing}$ the singular locus has codimension 2 in $\XXU$, it suffices to
check our statement over $\XXU^{\sm}$.

For convenience, denote
\begin{equation*}
    \mathcal{T}=\Lie(\mathcal{A}),\quad\mathcal{T}'=\Lie(\mathcal{A}^t)^\vee,\quad
    \mathcal{N}=\HHom_{\mathcal{O}_B}(\mathcal{T}',\mathcal{T}).
\end{equation*}
Recall from Theorem \ref{deformation}, if we change tensor product to homomorphism, there is 
then an isomorphism
\begin{equation*}
    \phi_5:T_{\XXU/\ZZn}\longrightarrow\mathcal{N}
\end{equation*}
over the smooth locus. Note that this isomorphism also implies that $\mathcal{N}$ is a 
rank 1 $\mathcal{O}_F\otimes_\ZZ\mathcal{O}_{\mathcal{X}^\sm}$-module.

There is then a canonical composition
\begin{equation*}
    \mathcal{N}\otf\mathcal{T}'\longrightarrow\mathcal{T}.
\end{equation*}
Now we take $\df$ on both sides and apply Lemma \ref{det}, we have
\begin{equation*}
    \mathcal{N}^{\otf2}\otf\df(\mathcal{T}')
    \longrightarrow\df(\mathcal{T}).
\end{equation*}
Note that on both sides, we have a rank $g$ locally free sheaf over $\XXU$.
Taking determinants on both sides and applying Lemma \ref{det} again, we finally have
\begin{equation*}
    \det(\mathcal{N})^{\otimes2}\otimes\det(\mathcal{T}')\longrightarrow\det(\mathcal{T}).
\end{equation*}
Then our first statement is equivalent to an isomorphism
\begin{equation*}
    \det(\mathcal{N})^{\otimes2}\otimes\det(\mathcal{T}')
    \longrightarrow d_B^g\det(\mathcal{T}),
\end{equation*}
which is also equivalent to an isomorphism
\begin{equation}\label{integral}
    \mathcal{N}^{\otf2}\otf\df(\mathcal{T}')
    \longrightarrow d_B\df(\mathcal{T}).
\end{equation}

In the rest of this section, we are going to check the isomorphism \eqref{integral} locally
at each non-archimedean places of $F$. There are two classes of these places, one are those
$\mathfrak{p}$ coprime to $d_B$, which we call them good primes.
The other are those $\mathfrak{p}|d_B$, which we call them bad primes. For simplicity,
we always use subscript $\mathfrak{p}$ for the base change over $\mathcal{O}_{F,\mathfrak{p}}$.

\begin{lemma}
In the case of $\mathfrak{p}\nmid d_B$, \eqref{integral} induces an isomorphism 
\begin{equation*}    
     \mathcal{N}_{\mathfrak{p}}^{\otf2}\otf
     \df(\mathcal{T}_{\mathfrak{p}}')\longrightarrow \df(\mathcal{T}_\mathfrak{p}).
\end{equation*}
\end{lemma}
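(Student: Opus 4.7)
The plan is to leverage the splitting $\mathcal{O}_{B,\mathfrak{p}} \cong M_2(\mathcal{O}_{F,\mathfrak{p}})$ available at a good prime $\mathfrak{p}\nmid d_B$, and use Morita equivalence to reduce every sheaf appearing in the statement to an invertible module over $\mathcal{O}_F\otimes_{\ZZ}\mathcal{O}_{\mathcal{X}_U^\sm,\mathfrak{p}}$, at which point the asserted isomorphism reduces to elementary multilinear algebra. A preliminary observation simplifies the target: since $\mathfrak{p}\nmid d_B$, the element $d_B$ is a unit in $\mathcal{O}_{F,\mathfrak{p}}$, so $d_B\df(\mathcal{T}_\mathfrak{p}) = \df(\mathcal{T}_\mathfrak{p})$, and it suffices to show that the local image of \eqref{integral} is a genuine isomorphism $\mathcal{N}_\mathfrak{p}^{\otf 2}\otf\df(\mathcal{T}'_\mathfrak{p})\to\df(\mathcal{T}_\mathfrak{p})$.

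First I would fix a matrix-unit idempotent $e\in\mathcal{O}_{B,\mathfrak{p}}$ under the splitting and set $M := e\mathcal{T}_\mathfrak{p}$ and $M' := e\mathcal{T}'_\mathfrak{p}$. The Kottwitz-type determinant condition in the PEL moduli problem forces $\mathcal{T}_\mathfrak{p}$ and $\mathcal{T}'_\mathfrak{p}$ to be locally free of rank $2$ over $\mathcal{O}_F\otimes_{\ZZ}\mathcal{O}_{\mathcal{X}_U^\sm,\mathfrak{p}}$, and the Morita functor $\mathcal{G}\mapsto e\mathcal{G}$ identifies each with two copies of a rank-one invertible $\mathcal{O}_F\otimes\mathcal{O}_{\mathcal{X}_U^\sm,\mathfrak{p}}$-module (namely $M$ and $M'$ respectively). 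The same equivalence produces a canonical isomorphism $\mathcal{N}_\mathfrak{p} = \HHom_{\mathcal{O}_B}(\mathcal{T}'_\mathfrak{p},\mathcal{T}_\mathfrak{p})\cong \HHom_{\mathcal{O}_F}(M',M)$, which is itself invertible.

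Second, trace the evaluation pairing $\mathcal{N}_\mathfrak{p}\otf\mathcal{T}'_\mathfrak{p}\to\mathcal{T}_\mathfrak{p}$ through Morita: it decomposes as two copies of the rank-one evaluation $\HHom_{\mathcal{O}_F}(M',M)\otf M'\to M$, which is an isomorphism by invertibility of $M'$. Applying the top exterior power $\df$ over $\mathcal{O}_F\otimes\mathcal{O}_{\mathcal{X}_U^\sm,\mathfrak{p}}$ to this isomorphism of rank-two modules, and invoking the standard identity $\wedge^2(\mathcal{L}\otf E)\cong \mathcal{L}^{\otf 2}\otf\wedge^2 E$ for an invertible $\mathcal{L}$ and a rank-two $E$, yields the desired isomorphism $\mathcal{N}_\mathfrak{p}^{\otf 2}\otf\df(\mathcal{T}'_\mathfrak{p})\cong\df(\mathcal{T}_\mathfrak{p})$.

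The main obstacle is bookkeeping: checking that the isomorphism produced by Morita genuinely coincides with the map induced by $\phi_5$ and Lemma~\ref{det} in the construction leading to \eqref{integral}, rather than differing by some spurious scalar or twist by the involution used in the polarization. Once that dictionary is in place, the mathematical content is light, and the $d_B$ factor that distinguishes the good and bad prime cases is invisible here precisely because $B$ is split at $\mathfrak{p}$; the interesting behavior of the $d_B$ factor will arise only when one treats the complementary case $\mathfrak{p}\mid d_B$.
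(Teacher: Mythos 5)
Your proof is correct and essentially the same as the paper's: the paper also splits $\mathcal{O}_{B,\mathfrak{p}}\cong M_2(\mathcal{O}_{F,\mathfrak{p}})$ and works through the idempotent decomposition $\mathcal{T}_\mathfrak{p}=e_1\mathcal{T}_\mathfrak{p}\oplus e_2\mathcal{T}_\mathfrak{p}$, observing that $\mathcal{N}_\mathfrak{p}\cong(e_j\mathcal{T}_\mathfrak{p})\otimes(e_j\mathcal{T}'_\mathfrak{p})^\vee$ for each $j$ and then identifying $\mathcal{N}_\mathfrak{p}^{\otimes 2}$ with $\det(\mathcal{T}_\mathfrak{p})\otimes\det(\mathcal{T}'_\mathfrak{p})^\vee$; your invocation of Morita equivalence is just a more abstract packaging of exactly this idempotent bookkeeping. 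Your worry about a ``spurious scalar or twist'' is unfounded here, since the map in question is the canonical evaluation pairing $\mathcal{N}_\mathfrak{p}\otf\mathcal{T}'_\mathfrak{p}\to\mathcal{T}_\mathfrak{p}$ and Morita equivalence is compatible with it by construction, which is also implicit in the paper's treatment.
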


\begin{proof}
By the definition of discriminant, $\mathcal{O}_{B,\mathfrak{p}}$ is isomorphic to 
$M_2(\mathcal{O}_{F,\mathfrak{p}})$.
Fix such an isomorphism and take idempotents $e_1=\begin{pmatrix} 1 & 0 \\0 & 0\end{pmatrix}$ and $e_2=\begin{pmatrix} 0 & 0 \\0 & 1\end{pmatrix}$, while the involution is given by
$\epsilon=\begin{pmatrix} 0 & 1\\ 1 & 0\end{pmatrix}$. Then the idempotent decomposition implies
\begin{equation*}
    \mathcal{T}_\mathfrak{p}=e_1\mathcal{T}_\mathfrak{p}\oplus e_2\mathcal{T}_\mathfrak{p},\quad
    \mathcal{T}'_\mathfrak{p}=e_1\mathcal{T}'_\mathfrak{p}\oplus e_2\mathcal{T}'_\mathfrak{p}.
\end{equation*}
Note that the involution $\epsilon$ gives isomorphisms $e_1\mathcal{T}_\mathfrak{p}\longrightarrow e_2\mathcal{T}_\mathfrak{p}$ and $e_1\mathcal{T}'_\mathfrak{p}\longrightarrow e_2\mathcal{T}'_\mathfrak{p}$. These are all
locally free over $\mathcal{X}_{U,\mathfrak{p}}$.

Note that there are canonical isomorphisms for $j=1,2$
\begin{equation*}
    \mathcal{N}_\mathfrak{p}\longrightarrow\HHom_{\mathcal{O}_B}(e_j\mathcal{T}'_\mathfrak{p},e_j\mathcal{T}_\mathfrak{p})\longrightarrow (e_j\mathcal{T}_\mathfrak{p})\otimes(e_j\mathcal{T}'_p)^{\vee}.
\end{equation*}
Thus we conclude that
\begin{equation*}
    \mathcal{N}_\mathfrak{p}^{\otimes 2}\longrightarrow (e_1\mathcal{T}_\mathfrak{p})
    \otimes(e_1\mathcal{T}'_\mathfrak{p})^\vee
    \otimes(e_2\mathcal{T}_\mathfrak{p})
    \otimes(e_2\mathcal{T}'_\mathfrak{p})^\vee.
\end{equation*}
is an isomorphism, which implies our lemma since $\det(\mathcal{T}_\mathfrak{p})\cong(e_1\mathcal{T}_\mathfrak{p})\otimes(e_2\mathcal{T}_\mathfrak{p})$.
\end{proof}

We also have the following lemma for the other case.
\begin{lemma}
In the case of $\mathfrak{p}| d_B$, \eqref{integral} induces an isomorphism 
\begin{equation*}    
     \mathcal{N}_{\mathfrak{p}}^{\otf 2}\otf
     \df(\mathcal{T}_{\mathfrak{p}}')\longrightarrow \mathfrak{p}\df(\mathcal{T}_\mathfrak{p}).
\end{equation*}
\end{lemma}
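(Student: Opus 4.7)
The plan is to verify \eqref{integral} locally at each ramified prime $\mathfrak{p}\mid d_B$, where the Morita idempotent argument of the previous lemma fails because $\mathcal{O}_{B,\mathfrak{p}}$ is now the maximal order in the nonsplit quaternion division algebra $B_\mathfrak{p}$. The source of the extra factor of $\mathfrak{p}$ is the pure quaternion $\mu$: since $\mu^2=d_B$ has $\mathfrak{p}$-valuation exactly one, $\mu$ serves as a two-sided uniformizer, so the two-sided maximal ideal is $\mathfrak{P}=\mu\mathcal{O}_{B,\mathfrak{p}}=\mathcal{O}_{B,\mathfrak{p}}\mu$ with $\mathfrak{P}^2=\mathfrak{p}\mathcal{O}_{B,\mathfrak{p}}$, and the residue $\mathcal{O}_{B,\mathfrak{p}}/\mathfrak{P}$ is the unique quadratic extension of $k_\mathfrak{p}$. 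As a preparatory computation: $\mathcal{T}_\mathfrak{p}$ is locally free of rank $2$ over $\mathcal{O}_{F,\mathfrak{p}}\otimes\mathcal{O}_{\mathcal{X}_{U,\mathfrak{p}}^{\sm}}$ by the Kottwitz determinant condition, and multiplication by $\mu$ on it is $\mathcal{O}_{F,\mathfrak{p}}$-linear with $\mu^2=d_B\cdot\mathrm{id}$, so $(\det\mu)^2=d_B^2$ and $\det\mu$ generates the ideal $d_B\mathcal{O}_{F,\mathfrak{p}}=\mathfrak{p}\mathcal{O}_{F,\mathfrak{p}}$; the same applies to $\mathcal{T}'_\mathfrak{p}$.

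Next I analyze the evaluation map $\mathcal{N}_\mathfrak{p}\otf\mathcal{T}'_\mathfrak{p}\to\mathcal{T}_\mathfrak{p}$. A local generator $\phi_0$ of the rank-one module $\mathcal{N}_\mathfrak{p}=\HHom_{\mathcal{O}_B}(\mathcal{T}'_\mathfrak{p},\mathcal{T}_\mathfrak{p})$ is $\mathcal{O}_{B,\mathfrak{p}}$-linear by construction, so its image is an $\mathcal{O}_{B,\mathfrak{p}}$-submodule of $\mathcal{T}_\mathfrak{p}$ which necessarily lies in the filtration $\mathcal{T}_\mathfrak{p}\supset\mu\mathcal{T}_\mathfrak{p}\supset\mathfrak{p}\mathcal{T}_\mathfrak{p}$. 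Using the local model for the Lie algebras forced by the Kottwitz condition, together with the compatibility between the polarization $\lambda$ and the positive involution $\bar\beta=\mu^{-1}\beta^*\mu$ (which identifies $\mathcal{T}'_\mathfrak{p}$ with a $\mu$-twist of $\mathcal{T}_\mathfrak{p}$), one checks that at a bad prime no $\mathcal{O}_{B,\mathfrak{p}}$-linear map $\mathcal{T}'_\mathfrak{p}\to\mathcal{T}_\mathfrak{p}$ can be surjective, and more precisely that the image of $\phi_0$ equals exactly $\mu\mathcal{T}_\mathfrak{p}$. The evaluation therefore factors as
\begin{equation*}
    \mathcal{N}_\mathfrak{p}\otf\mathcal{T}'_\mathfrak{p}\;\xrightarrow{\sim}\;\mu\mathcal{T}_\mathfrak{p}\;\hookrightarrow\;\mathcal{T}_\mathfrak{p}.
\end{equation*}

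Finally I descend via $\df$. Applying the second half of Lemma \ref{det} to this map of rank-$2$ modules produces $\mathcal{N}_\mathfrak{p}^{\otf 2}\otf\df(\mathcal{T}'_\mathfrak{p})\to\df(\mathcal{T}_\mathfrak{p})$ with image $\df(\mu\mathcal{T}_\mathfrak{p})\subset\df(\mathcal{T}_\mathfrak{p})$; by the preparatory computation above, $\df(\mu\mathcal{T}_\mathfrak{p})=(\det\mu)\df(\mathcal{T}_\mathfrak{p})=\mathfrak{p}\df(\mathcal{T}_\mathfrak{p})$ on the $\mathfrak{p}$-part, which is exactly the claimed isomorphism. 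The main obstacle is the middle paragraph: pinning down that the image of $\phi_0$ is exactly $\mu\mathcal{T}_\mathfrak{p}$, neither more nor less. This requires unraveling the explicit local structure of $\mathcal{T}_\mathfrak{p}$ as an $\mathcal{O}_{B,\mathfrak{p}}$-module --- morally, identifying it with a ``special'' $\mathcal{O}_B$-formal module of Drinfeld type at the ramified prime --- and tracking the $\mu$-twist introduced by the polarization, with the noncommutativity of $\mathcal{O}_{B,\mathfrak{p}}$ demanding care to keep left and right actions of $\mu$ distinct throughout.
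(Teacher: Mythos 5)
Your overall strategy is sound and, in fact, the observation that the whole lemma can be packaged as ``the evaluation map has image $\mu\mathcal{T}_\mathfrak{p}$'' is a clean way to organize the argument; your preparatory computation that $\det(\mu)$ generates $\mathfrak{p}$ on the $\mathfrak{p}$-part of $\df(\mathcal{T})$, and the final descent via the second half of Lemma~\ref{det}, are both correct. But you have located the entire content of the lemma in the middle paragraph and then not proved it. The sentence ``one checks that at a bad prime no $\mathcal{O}_{B,\mathfrak{p}}$-linear map $\mathcal{T}'_\mathfrak{p}\to\mathcal{T}_\mathfrak{p}$ can be surjective, and more precisely that the image of $\phi_0$ equals exactly $\mu\mathcal{T}_\mathfrak{p}$'' is precisely the assertion that needs an argument, and you acknowledge this yourself in the last paragraph. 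Note also that nonsurjectivity alone would not suffice: you need the image to be exactly $\mu\mathcal{T}_\mathfrak{p}$ and not a smaller submodule, and that upper bound and lower bound both require knowing the $\mathcal{O}_{B,\mathfrak{p}}$-module structure of $\mathcal{T}_\mathfrak{p}$ and $\mathcal{T}'_\mathfrak{p}$ concretely.

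What the paper does to close this gap is the following, and it is the real work of the proof. After passing to an \'etale neighborhood and base changing along the unramified quadratic extension $\mathcal{O}_{F,\mathfrak{p}^2}/\mathcal{O}_{F,\mathfrak{p}}$, one identifies $\mathcal{O}_D\otimes R$ with the Eichler-type order $\left(\begin{smallmatrix}R&R\\\varpi R&R\end{smallmatrix}\right)$ via the standard embedding $x\mapsto\mathrm{diag}(x,\bar x)$, $j\mapsto\left(\begin{smallmatrix}0&1\\\varpi&0\end{smallmatrix}\right)$. One then classifies $R$-finite, $\mathcal{O}_D$-stable modules $M$ of rank two satisfying the Kottwitz condition: the idempotent decomposition $M=e_1M\oplus e_2M$ into two rank-one pieces, together with $j^2=\varpi$ and $\varpi$ a uniformizer, forces $j$ to act on chosen bases by either $(jx,jy)=(\varpi y,x)$ or $(jx,jy)=(y,\varpi x)$, giving exactly two isomorphism classes. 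The crucial input --- which you gesture at with ``$\mu$-twist'' but do not establish --- is that $\mathcal{T}_\mathfrak{p}$ and $\mathcal{T}'_\mathfrak{p}$ necessarily realize the two \emph{different} classes, because the duality construction (via the polarization and the main involution) swaps the two idempotent components. Once that is in hand, a direct matrix computation shows that a generator of $\Hom_{\mathcal{O}_D}(T',T)$ has image $e_1T\oplus\varpi e_2T$, which is exactly $\mu T$, and your $\df$ argument then finishes. So your proposal identifies the right target but replaces the classification-and-duality computation with an appeal to intuition about Drinfeld special formal modules; supplying that computation is the missing step.
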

\begin{proof}
This case is more complicated than the first case. Note that for any quaternion
algebra over a local field, it splits after passing to the unique unramified quadratic extension. 
Thus in this case,
instead of passing to $\mathcal{O}_{F,\mathfrak{p}}$, we will pass to 
$\mathcal{O}_{F,\mathfrak{p}^2}$, the valuation ring of the unique unramified quadratic extension. 
Moreover, taking \'{e}tale covering allows us to compute everything over smooth scheme.

For convenience, we use $D$ for $B\otimes_F F_{\mathfrak{p}}$ and $\mathcal{O}_D$ for its unique maximal order
$\mathcal{O}_B\otimes_{\mathcal{O}_F}\mathcal{O}_{F,\mathfrak{p}}$. By assumption, $D$ is division.
We also denote by $\varpi$ the uniformizer of $\mathcal{O}_{F,\mathfrak{p}}$, and sometimes 
for convenience it also denotes the uniformizer of $\mathcal{O}_{F,\mathfrak{p}^2}$, which 
depends on the situation.

Now, since the problem is local, by taking any closed point in $\mathcal{X}_{\mathfrak{p}^2}$, 
we denote by $R$ the  flat noetherian local ring over $\mathcal{O}_{F,\mathfrak{p}^2}$
such that $\mathfrak{p}R$ is a prime ideal, which is the stalk at the closed point.
We still use $\varpi$ for the generator of $\mathfrak{p}R$.
Denote by $T$ and $N$ the fibers of locally free sheaves 
$\mathcal{T}_{\mathfrak{p}^2}$ and $\mathcal{N}_{\mathfrak{p}^2}$ at the closed point,
here the subscript $\cdot_{\mathfrak{p}^2}$ means passing to $\mathcal{O}_{F,\mathfrak{p}^2}$.
Then, they are $R$-modules with a left $\mathcal{O}_D$-action satisfying determinant condition.
Here, the determinant condition means $\det(i(\beta))=q(\beta)$ for any $\beta\in\mathcal{O}_D$.
Now the problem is reduced to check that 
\begin{equation*}
    (N\otimes_R N)\otimes_R \det(T')\longrightarrow\det(T)
\end{equation*}
given by the determinant of the composition map 
\begin{equation*}
    N\otimes_R T'\longrightarrow \Hom_R(T',T)\longrightarrow T
\end{equation*}
is injective with image $\mathfrak{p}\det(T)$.

To work out this purely algebraic question, first we need to translate every objects into matrices.
Recall that we always have a decomposition for $\mathcal{O}_D$, i.e., there exists some $j\in\mathcal{O}_D$
with
\begin{equation*}
    \mathcal{O}_D=\mathcal{O}_{F,\mathfrak{p}^2}\oplus\mathcal{O}_{F,\mathfrak{p}^2}j
\end{equation*}
such that
\begin{equation*}
    j^*=-j,\quad j^2=\varpi,\quad jx=\bar{x}j(x\in\mathcal{O}_{F,\mathfrak{p}^2}).
\end{equation*}
Here $\bar{x}$ denotes the conjugation of $x$ under Galois action.
I refer to \cite[Theorem 13.3.11]{Voi} for this proposition about quaternion algebra over local field.
Based on this result, there is an $R$-linear ring isomorphism
\begin{equation*}
    \tau:\mathcal{O}_D\otimes_{\mathcal{O}_{F,\mathfrak{p}}}R\longrightarrow
    \begin{pmatrix}
        R & R\\ \varpi R & R
    \end{pmatrix}.
\end{equation*}
Indeed, it is sufficient to check the case $R=\mathcal{O}_{F,\mathfrak{p}^2}$. In this case we 
can take an injective homomorphism
\begin{equation*}
    \mathcal{O}_D\longrightarrow M_2(\mathcal{O}_{F,\mathfrak{p}^2}),\quad x\longrightarrow
    \begin{pmatrix}
        x &  \\ & \bar{x}
    \end{pmatrix},\quad j\longrightarrow
    \begin{pmatrix}
         & 1 \\ \varpi & 
    \end{pmatrix}, x\in \mathcal{O}_{F,\mathfrak{p}^2},
\end{equation*}
while we base change to $\mathcal{O}_{F,\mathfrak{p}^2}$ this becomes
\begin{equation*}
    \mathcal{O}_D\otimes_{\mathcal{O}_{F,\mathfrak{p}}}\mathcal{O}_{F,\mathfrak{p}^2}\longrightarrow M_2(\mathcal{O}_{F,\mathfrak{p}^2}),\quad x\otimes a\longrightarrow
    \begin{pmatrix}
        ax &  \\ & a\bar{x}
    \end{pmatrix},\quad j\longrightarrow
    \begin{pmatrix}
         & 1 \\ \varpi & 
    \end{pmatrix}, x\otimes a\in \mathcal{O}_{F,\mathfrak{p}^2}\otimes\mathcal{O}_{F,\mathfrak{p}^2}.
\end{equation*}
Clearly this implies $\tau$ is an isomorphism.

Next, we claim that up to isomorphism there are exactly two $R$-modules with left $\mathcal{O}_D$-action 
satisfying determinant condition. Under the identification by $\tau$, denote by $e_1=\begin{pmatrix} 1 & 0 \\0 & 0\end{pmatrix}$ and $e_2=\begin{pmatrix} 0 & 0 \\0 & 1\end{pmatrix}$ the idempotent elements in
$\mathcal{O}_D\otimes_{\mathcal{O}_{F,\mathfrak{p}}}R$, and $j=\begin{pmatrix} 0 & 1 \\ \varpi & 0\end{pmatrix}$.
Then for any such module $M$, there is an idempotent decomposition 
\begin{equation*}
    M=e_1 M\oplus e_2 M,\quad e_1 M=Rx,\quad e_2 M=Ry
\end{equation*}
as $R$-modules, where $e_1 M,e_2 M$ are projective hence free over $R$ with rank 1 by the determinant condition,
and $x,y$ are their bases. Note that since $je_1=e_2 j$ and $j e_2=e_1 j$, the action of $j$ on $M$
switches $e_1 M$ and $e_2 M$. Thus, in order to determine such rank 2 module $M$, it is equivalent to determine
the action of $j$ on the bases $x,y$, i.e., assume
\begin{equation*}
    jx=ay,\quad jy=bx,\quad a,b\in R,
\end{equation*}
then $a,b$ determines $M$ up to isomorphism. Note that $ab=\varpi$ since $j^2=\varpi$. But $\varpi$ is the 
uniformizer, hence either $a\in R^\times$ or $b\in R^\times$. For $b\in R^\times$, one can replace $x$
by $b^{-1}x$, so the action becomes $jx=\varpi y,jy=x$. Similarly for the case $a\in R^\times$,
we have $jx=y,jy=\varpi x$. This provides with two $R$-modules with left $\mathcal{O}_D$-action 
satisfying determinant condition, and they are not isomorphic since the map induced by $j$-action
$e_1 M\longrightarrow e_2 M$ is surjective in exactly one case.

Furthermore, it is clear from the construction of these two $R$-modules with left $\mathcal{O}_D$-action 
satisfying the determinant condition that for such module $T$, the dual $T^\vee$ is not isomorphic to $T$.
Indeed, the dualizing process switches two idempotent components, hence permutes $x$ and $y$.

Finally, we are ready to complete our proof. Without loss of generality, we assume 
\begin{equation*}
    T=e_1 T\oplus e_2 T,\quad e_1 T=Rx,\quad e_2 T=Ry,\quad jx=\varpi y,\quad jy=x,
\end{equation*}
and then
\begin{equation*}
    T'=e_1 T'\oplus e_2 T',\quad e_1 T'=Rx',\quad e_2 T'=Ry',\quad jx'=y',\quad jy'=\varpi x'.
\end{equation*}
Clearly, there are canonical maps
\begin{equation*}
    \Hom_{\mathcal{O}_D}(T',T)\longrightarrow\Hom_R(e_j T',e_j T),\quad j=1,2.
\end{equation*}
Moreover, if we consider the $j$-action more carefully, i.e., assume $x'$ maps to $ax$ and $y'$ 
maps to $by$ for some $a,b\in R$, then $j$-action implies $b=\varpi a$. Since $e_1,e_2$ and $j$ 
determines the entire action of $\mathcal{O}_D$, we are able to conclude that 
\begin{equation*}
    \Hom_{\mathcal{O}_D}(T',T)\longrightarrow\Hom_R(e_1 T',e_1 T),\quad\Hom_{\mathcal{O}_D}(T',T)\longrightarrow\varpi\Hom_R(e_2 T',e_2 T)
\end{equation*}
are both isomorphisms. Hence, we have isomorphism
\begin{equation*}
    \Hom_{\mathcal{O}_D}(T',T)\otimes_R T'\longrightarrow e_1 T\oplus\varpi e_2 T.
\end{equation*}
Taking determinants on both sides, we finish the proof of this lemma.
\end{proof}

Combine the above two lemmas, we then check all the non-archimedean places which are relative 
prime to $n$. Thus we finish the proof of the first statement in our main theorem.

\section{Comparison of the metrics}\label{sec metric}
In this section we explicitly compare two metrics in the complex setting. Since everything is now 
over $\mathbb{C}$, we only need to understand the following two exact sequences
\begin{equation*}    
     0\longrightarrow\pi^*\Omega_{\mathcal{H}_r/\mathbb{C}}\longrightarrow\Omega_{\mathcal{A}/
     \mathbb{C}}     
     \longrightarrow\Omega_{\mathcal{A}/\mathcal{H}_r}\longrightarrow 0,
\end{equation*}
which is for Hilbert modular varieties, and
\begin{equation*}    
     0\longrightarrow\pi^*\Omega_{\mathcal{H}^g/\mathbb{C}}\longrightarrow\Omega_{\mathcal{A}/
     \mathbb{C}}     
     \longrightarrow\Omega_{\mathcal{A}/\mathcal{H}^g}\longrightarrow 0,
\end{equation*}
which is for twisted Hilbert modular varieties.
Here $\pi:\mathcal{A}\longrightarrow\mathcal{H}^g$ denotes the universal abelian scheme, which 
is in fact an algebraic stack. Recall our construction of the map in the main theorem
\ref{main1} and \ref{main2}, it turns out that we
need to find an explicit formula for the connecting morphism
\begin{equation}\label{com1}
\phi:\underline{\Omega}_{\mathcal{A}/\mathcal{H}_r}\longrightarrow\Lie(\mathcal{A}/\mathcal{H}_r)\otimes\Omega_{\mathcal{H}_r/\mathbb{C}}
\end{equation}
for Hilbert--Siegel modular varieties, and
\begin{equation}\label{com2}
\phi:\underline{\Omega}_{\mathcal{A}/\mathcal{H}^g}\longrightarrow\Lie(\mathcal{A}/\mathcal{H}^g)\otimes\Omega_{\mathcal{H}^g/\mathbb{C}}
\end{equation}
for twisted Hilbert modular varieties. Note that this $\phi$ 
map corresponds to the morphism \eqref{phi1}.

\subsection{Explicit map between complex tangent space} \label{Cech}
In this section we establish a useful lemma which is important for our later calculation. 

In order to compute $\phi$ explicitly, it is important to understand the following isomorphism
\begin{equation*}
    V\longrightarrow\Lie(A)\longrightarrow\Lie(A^t)\longrightarrow H^1(A,\mathcal{O}_A).
\end{equation*}
Here $V$ is a complex vector space of dimension $g$, $A=V/\Lambda$ is a $g$-dimensional complex abelian variety, with a polarization $\lambda$ induced by a Riemann form $E:\Lambda\times\Lambda\longrightarrow\mathbb{Z}$ for $\Lambda$ a lattice in $V$.
Then the first map comes from $\Lambda$ tensoring $\mathbb{R}$, the second is by the polarization, while
the third map is the canonical isomorphism by deformation of line bundles, which we have mentioned 
in {\S}\ref{sec KS}. 

The main difficulty to give an explicit map of this isomorphism is the map to $H^1(A,\mathcal{O}_A)$.
We will use \v{C}ech cohomology to introduce an explicit canonical map
\begin{equation*}
   h:\Hom_{\mathbb{Z}}(\Lambda,\mathbb{C})\longrightarrow H^1(A,\mathcal{O}_A),
\end{equation*}
which is a special case for the explicit homomorphism
\begin{equation*}
   \delta:H^1(\Lambda,\mathcal{F}(V))\longrightarrow H^1(A,\mathcal{F}_A).
\end{equation*}
Here $\mathcal{F}$ is a sheaf in the complex analytic setting with trivial $\Lambda$-action.

We first need a suitable open cover of $A$ coming from cover of $V$. Take a set $\{U_t\}_{t\in I}$
that is a family of open subsets $U_t\subset V$, satisfying the following condition:\\
(1) each composition $U_t\longrightarrow V\longrightarrow A$ is injective, \\
(2) $\bigcup_{t\in I}U_t\longrightarrow A$ is surjective, \\
(3) for any $t,t'\in I$, the difference of two sets $U_{t'}-U_t$, also as a subset of $V$, contains
at most one point of $\Lambda$. Denote by $c_{t,t'}$ this point if it exists.
Denote by $\bar{U}_t$ the image of $U_t\longrightarrow A$, which then forms a cover of $A$.
Obviously, such a cover exists just by taking each $U_t$ small enough. We call such a cover an admissible
cover of $A$.

Now, we define $\delta$ explicitly. For any cross-homomorphism $\alpha:\Lambda\longrightarrow\mathcal{F}(V)$, define a \v{C}ech cocycle $\delta(\alpha)$
such that each component $\delta(\alpha)_{t,t'}\in\mathcal{F}(\bar{U}_{t,t'})$ on $\bar{U}_{t,t'}:=\bar{U}_t\cap\bar{U}_{t'}$ is given by the image of $\alpha(c_{t,t'})$ under
the composition $\mathcal{F}(V)\longrightarrow\mathcal{F}(U_t)\longrightarrow\mathcal{F}(\bar{U}_t)\longrightarrow\mathcal{F}(\bar{U}_{t,t'})$. This is obviously a \v{C}ech cocycle by the uniqueness
of points $c_{t,t'}$. Hence we define $\delta$ explicitly.

Back to the definition of $h$, one just chooses $\mathcal{F}=\mathcal{O}$, then define $h$ to be
the composition
\begin{equation*}
   \Hom_{\mathbb{Z}}(\Lambda,\mathbb{C})\longrightarrow H^1(\Lambda,\mathbb{C})\longrightarrow H^1(\Lambda,\mathcal{O}(V))\longrightarrow H^1(A,\mathcal{O}_A).
\end{equation*}
Here, the first isomorphism follows from $\Lambda$ acting on $\mathbb{C}$ trivially, and the second
map is induced by natural map $\mathbb{C}\longrightarrow\mathcal{O}(V)$.

Finally, we have the following lemma.
\begin{lemma}\label{app}
The composition
\begin{equation*}
    V\longrightarrow\Lie(A)\longrightarrow\Lie(A^t)\longrightarrow H^1(A,\mathcal{O}_A)
\end{equation*}
is given by
\begin{equation*}
    z\mapsto2\pi i h(E(z,\cdot)),
\end{equation*}
where $E(z,\cdot)$ is viewed as an element in $\Hom_{\mathbb{Z}}(\Lambda,\mathbb{C})$.
\end{lemma}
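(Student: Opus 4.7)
The plan is to unpack each of the three maps in the composition and then carry out a Čech cocycle computation using an admissible cover as in the statement.

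First, I would identify $V = \Lie(A)$ in the canonical way and identify the polarization map $\Lie(A) \to \Lie(A^t)$ as the derivative at the origin of $\phi_L \colon A \to A^t$, where $L$ is a line bundle on $A$ whose first Chern class equals the Riemann form $E$. Explicitly, $\phi_L(z) = t_z^* L \otimes L^{-1}$ for $z \in V$ (with $t_z$ denoting translation by $z$). By the Appell--Humbert theorem, $t_z^* L \otimes L^{-1}$ has Appell--Humbert data $(0, \chi_z)$, where $\chi_z \colon \Lambda \to U(1)$ is the character $\chi_z(\mu) = e^{2\pi i E(z, \mu)}$.

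Second, I would describe the isomorphism $\Lie(A^t) \to H^1(A, \mathcal{O}_A)$ via the exponential sequence $0 \to \mathbb{Z} \to \mathcal{O}_A \to \mathcal{O}_A^* \to 0$, in which the map $\mathcal{O}_A \to \mathcal{O}_A^*$ is $f \mapsto e^{2\pi i f}$. A tangent vector at the identity of $\Pic^0(A) \subset H^1(A, \mathcal{O}_A^*)$, given as a one-parameter family $L_t$ with $L_0 = \mathcal{O}$, corresponds to the Čech cocycle in $\mathcal{O}_A$ obtained by differentiating its $\mathcal{O}_A^*$-valued Čech cocycle at $t=0$. In particular, the derivative at $t=0$ of $e^{2\pi i t g}$ (for a constant $g$) is $2\pi i g$, so this is where the factor $2\pi i$ in the final formula enters.

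Third, I combine these ingredients on an admissible cover $\{\bar U_s\}$ of $A$: the line bundle $\phi_L(tz)$ is represented by the constant Čech cocycle $\{e^{2\pi i t E(z, c_{s,s'})}\}$ on intersections $\bar U_s \cap \bar U_{s'}$, since trivializations pulled back from $U_s$ and $U_{s'}$ in $V$ differ precisely by the character $\chi_{tz}$ evaluated at the lattice difference $c_{s,s'}$. Differentiating at $t=0$ produces the cocycle $\{2\pi i E(z, c_{s,s'})\}$, which by the very definition of $h$ is $2\pi i \cdot h(E(z, \cdot))$, as claimed.

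The main obstacle will be matching conventions: I will need to verify carefully the Appell--Humbert cocycle for $t_z^* L \otimes L^{-1}$ relative to the chosen admissible cover, and ensure that the $2\pi i$ factor appears exactly once and with the correct sign, originating in the derivative of the exponential $f \mapsto e^{2\pi i f}$ rather than being absorbed into a reparametrization of $\Lie(A^t)$ or into the identification $H^1(A, \mathcal{O}_A) \cong H^1(\Lambda, \mathcal{O}(V))$.
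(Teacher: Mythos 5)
Your proposal is correct and follows the same route the paper invokes via its citation of \cite{Yuan2}: realize the polarization on tangent spaces as $d\phi_L$, use Appell--Humbert to identify $t_z^*L\otimes L^{-1}\cong L(0,\,e^{2\pi i E(z,\cdot)})$, and read off the resulting constant $\check{\text{C}}$ech cocycle $\{\,e^{2\pi i t E(z,c_{s,s'})}\,\}$ on an admissible cover, differentiating at $t=0$ to land on $2\pi i\, h(E(z,\cdot))$. One caution about your step two: the identification $\Lie(A^t)\cong H^1(A,\mathcal{O}_A)$ the paper uses is the deformation-theoretic one from \cite[p.~37]{Mil}, which sends a family $L_t$ with $\mathcal{O}_A^*$-cocycle $g_{s,s'}(t)$ to the cocycle $\{g'_{s,s'}(0)\}$ (truncating $g_{s,s'}(\epsilon)=1+\epsilon\,g'_{s,s'}(0)$); that is exactly what you compute, and it produces the stated $2\pi i$. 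If instead one literally used the exponential sequence with $f\mapsto e^{2\pi i f}$ to parametrize $\Pic^0(A)=H^1(\mathcal{O}_A)/H^1(\mathbb{Z})$, the derivative of that covering map would divide out the $2\pi i$ and the factor would disappear; so in a careful write-up you should state that the canonical isomorphism is the algebraic/infinitesimal one, under which the $2\pi i$ in the final formula traces back to the $2\pi i$ in the Appell--Humbert semicharacter $e^{2\pi i E(z,\cdot)}$, not to the normalization of the analytic exponential.
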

I refer \cite[page 17]{Yuan2} for a complete proof of this lemma. The key point of the proof is
Appel-Humbert theorem which can be found in \cite{Mum}.

\subsection{Explicit map for Hilbert--Siegel modular varieties}
In this subsection we give an explicit expression of \eqref{com1}. In the case of Hilbert--Siegel
modular varities,
the universal abelian scheme  $\pi:\mathcal{A}\longrightarrow\mathcal{H}^g_r$ is given by
\begin{equation*}
   \mathbb{Z}^{2gr}\backslash(\mathcal{H}^g_r\times\mathbb{C}^{gr}).
\end{equation*}
Without loss of generality, throughout this subsection, we assume $g=1$. For $g>1$, there
is no essential difference. See our computation in the next subsection when $g>1$.

Now, for $g=1$, the action of $\ZZ^{2r}$ is given explicitly by
\begin{equation*}
   (a_1,...,a_r,b_1,...,b_r)\cdot(Z,(z_1,...,z_r))=(Z,(z_1,...,z_r)+(a_1,...,a_r)\cdot Z+(b_1,...,b_r)).
\end{equation*}
Here $a_i,b_i\in\mathbb{Z}$, $Z\in\mathcal{H}_r$ is a matrix, while $z_i\in \mathbb{C}$.
Later, we will use $\theta=(\alpha,
\beta)\in\mathbb{Z}^{2r}$ for $(a_1,...,a_r,b_1,...,b_r)$,
and $\zeta\in\mathbb{C}^r$ for $(z_1,...,z_r)$.
Clearly from this definition, for each $Z\in\mathcal{H}_r$, we have a canonical uniformization
\begin{equation*}
   A_Z:=\mathbb{C}^r/\Lambda_Z,\quad\Lambda_Z:=\mathbb{Z}^r\cdot Z\oplus\mathbb{Z}^r,
\end{equation*}
where we regard elements in $\mathbb{Z}^r$ as row vectors, with $\mathbb{Z}^r\subset\mathbb{R}^r$ and $\mathbb{R}^r\otimes\mathbb{C}\cong\mathbb{C}^r$ in natural
way. Then $\theta$ defined above also means the coordinate of a point in lattice.
Note that there is a canonical positive Riemann form over $\Lambda_Z$
\begin{equation*}
   E:\Lambda_Z\times\Lambda_Z\longrightarrow\mathbb{Z},\quad E(\alpha\cdot Z+\beta,\alpha'\cdot Z+\beta')=-\alpha\cdot\beta'+\alpha'\cdot\beta.
\end{equation*}
Here $\alpha,\alpha',\beta,\beta'\in\mathbb{Z}^r$. 

Moreover, we can choose $\{\de z_i\}_{i=1,...,r}$ as a basis of locally free sheaves $\Omega_{\mathcal{A}/\mathcal{H}_r}$ and $\underline{\Omega}_{\mathcal{A}/\mathcal{H}_r}$. 
Then $\wedge_{i=1}^r\de z_i$ becomes a basis of $\underline{\omega}_{\mathcal{A}/\mathcal{H}_r}$,
and $\{\frac{\partial}{\partial z_i}\}_{i=1,...r}$ becomes a basis of $\Lie(\mathcal{A}/\mathcal{H}_r)$.
Also recall that we used $\{\de Z_{ij}\}_{j\le i}$ as a basis of $\Omega_{\mathcal{H}_r/\mathbb{C}}$.

Here is the main result in this subsection.
\begin{theorem}\label{explicit1}
The Kodaira--Spencer map over Siegel upper half space
\begin{equation*}
    \phi:\underline{\Omega}_{\mathcal{A}/\mathcal{H}_r}\longrightarrow\Lie(\mathcal{A}/\mathcal
    {H}_r)\otimes\Omega_{\mathcal{H}_r/\mathbb{C}}
\end{equation*}
gives
\begin{equation*}
    \de z_j\mapsto \sum_{i=1}^r\frac{1}{2\pi i}(\frac{\partial}{\partial z_i})\otimes\de Z_{ij}.
\end{equation*}
Therefore, the map
\begin{equation*}
    \psi:\underline{\omega}_\mathcal{A}^{\otimes r+1}\longrightarrow\omega_{\mathcal{H}_r/\mathbb{C}}
\end{equation*}
induced by $\phi$ gives
\begin{equation*}
    (\wedge^i \de z_i)^{\otimes r+1}\mapsto(\frac{1}{2\pi i})^{\frac{r(r+1)}{2}}(\de\tau),
\end{equation*}
where $\de\tau$ means $\wedge^{i\ge j} \de Z_{ij}$, which has been mentioned before.
\end{theorem}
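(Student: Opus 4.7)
The plan is to reduce the computation of $\phi$ to the explicit \v{C}ech-cohomology description established in \S\ref{Cech}, then invoke Lemma \ref{app} to convert the resulting cross-homomorphism into a vector in $\Lie(\mathcal{A})$. The second (determinantal) formula follows by tracing through the construction of $\psi$ from $\phi$.

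Concretely, I would first fix an admissible cover $\{U_t\}$ of $\mathbb{C}^r$ as in \S\ref{Cech} and lift $dz_j$, a global section of $\Omega_{\mathcal{A}/\mathcal{H}_r}$, to the tautological 1-form $dz_j$ on each $\bar U_t$, now viewed as a local section of $\Omega_{\mathcal{A}/\mathbb{C}}$. Since the two local coordinate functions satisfy $z|_{U_{t'}} = z|_{U_t} + c_{t,t'}$ with $c_{t,t'} = \alpha Z + \beta \in \Lambda_Z$, differentiating gives $d(z_j|_{U_{t'}}) - d(z_j|_{U_t}) = \sum_{i}\alpha_i\, dZ_{ij}$ on the overlap, because $\alpha, \beta$ are constants while $Z$ is a coordinate on $\mathcal{H}_r$. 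This difference is a \v{C}ech 1-cocycle with values in $\pi^*\Omega_{\mathcal{H}_r/\mathbb{C}}$, and by the definition of the connecting morphism, its class represents $\phi(dz_j)$.

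Next I would factor out each $dZ_{ij}$ and recognize the remaining scalar cocycle $c_{t,t'}\mapsto\alpha_i$ as the image under the map $h$ of \S\ref{Cech} of the cross-homomorphism $\alpha Z + \beta \mapsto \alpha_i$. The key algebraic point is that this cross-homomorphism coincides with $E(e_i,\,\cdot)$: indeed, decomposing $e_i \in V = \Lambda_Z \otimes \mathbb{R}$ as $0\cdot Z + e_i$ and plugging into the formula $E(\alpha Z + \beta,\alpha' Z + \beta') = -\alpha\beta' + \alpha'\beta$ gives $E(e_i,\alpha Z + \beta) = \alpha_i$. Lemma \ref{app} then identifies the class of this cocycle in $H^1(A,\mathcal{O}_A) \cong \Lie(\mathcal{A}^t)$ with $\frac{1}{2\pi i}\,e_i$, which under the polarization corresponds to $\frac{1}{2\pi i}\,\partial/\partial z_i$ in $\Lie(\mathcal{A})$. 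Summing over $i$ gives the stated formula for $\phi$.

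For the induced map $\psi$, I would apply the construction of \S\ref{construct} pointwise: contracting the $\Lie(\mathcal{A})$ factor against a second copy of $\underline{\Omega}_{\mathcal{A}}$ produces $\phi_2(dz_j \otimes dz_k) = \frac{1}{2\pi i}\,dZ_{kj}$, which is automatically symmetric in $j,k$ by the symmetry of $Z$, hence descends modulo the relations $\mathcal{R}$ of Theorem \ref{deformation} to a map out of $\mathrm{Sym}^2\underline{\Omega}_{\mathcal{A}}$. Taking top exterior powers over the $\frac{r(r+1)}{2}$-dimensional target and using the canonical isomorphism $\det(\mathrm{Sym}^2 V) \cong (\det V)^{\otimes r+1}$ for $V$ of rank $r$, the basis vector $\wedge_{j\le k}(dz_j\cdot dz_k)$ maps to $(\wedge_i dz_i)^{\otimes r+1}$ on the left and to $\left(\frac{1}{2\pi i}\right)^{\frac{r(r+1)}{2}} d\tau$ on the right, yielding the second formula. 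The main obstacle is careful bookkeeping of the chain of identifications (projection formula, the deformation-theoretic isomorphism $R^1\pi_*\mathcal{O}_{\mathcal{A}} \cong \Lie(\mathcal{A}^t)$, and the polarization $\Lie(\mathcal{A}^t)\cong\Lie(\mathcal{A})$), each contributing signs and factors of $2\pi i$ that must compose correctly; Lemma \ref{app} is designed to package these together, so the cleanest approach is to use it as a black box and only verify that our cocycle matches $E(e_i,\,\cdot)$.
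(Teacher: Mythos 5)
Your proposal is correct and takes essentially the same route as the paper: the Čech-cohomology cocycle computation from the transition functions, identification of the cocycle $\theta\mapsto\alpha_i$ with $E(e_i,\cdot)$, application of Lemma \ref{app} to read off the coefficient $\frac{1}{2\pi i}$, and then the descent through $\mathrm{Sym}^2\underline{\Omega}_\mathcal{A}$ and its determinant to obtain $\psi$. The only cosmetic difference is that you make explicit the linear-algebra isomorphism $\det(\mathrm{Sym}^2 V)\cong(\det V)^{\otimes r+1}$, whereas the paper invokes it implicitly through the machinery of \S\ref{construct}; otherwise the two arguments coincide step for step.
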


\begin{proof}
To begin with, we prove the first statement about $\phi$. Fix $Z^0\in\mathcal{H}_r$, I claim 
that on the fiber above $Z^0$, the connecting map
\begin{equation*}
    \phi_0:\underline{\Omega}_{\mathcal{A}/\mathcal{H}_r}\longrightarrow R^1\pi_*\mathcal{O}_\mathcal{A}\otimes\Omega_{\mathcal{H}_r/\mathbb{C}}
\end{equation*}
is given by
\begin{equation*}
    \de z_j|_{Z^0}\mapsto \sum_{i=1}^r h(\delta_i)\otimes\de Z_{ij}|_{Z^0},
\end{equation*}
where
\begin{equation*}
    h:\Hom_\mathbb{Z}(\Lambda_{Z^0},\mathbb{C})\longrightarrow H^1(\mathcal{A}_{Z^0},\mathcal{O}_{\mathcal{A}_{Z^0}})
\end{equation*}
is the map defined in \ref{Cech} and $\delta_i\in\Hom_\mathbb{Z}(\Lambda_{Z^0},\mathbb{C})$ 
sends $\theta^0=(\alpha^0,\beta^0)\in\Lambda_{Z^0}$ (as a coordinate) to $\alpha_i^0$, i.e., 
the $i$-th term of $\alpha^0\in\mathbb{Z}^r$.

To prove this claim, we again use \v{C}ech cohomology as before. Similar to the case of a single complex abelian variety, since $\mathcal{A}$ has a universal cover $\mathcal{H}_r\times\mathbb{C}^r$, we can take a family of open subsets 
$\{U_t\}_{t\in I}$ satisfying the same kind of admissible condition as above. 
This provides us with an admissible cover $\{\bar{U}_t\}_{t\in I}$ of $\mathcal{A}$.
Note that conditions (1) and (2) are obvious, while condition (3) becomes \\
(3) for any $t,t'\in I$, the difference $U_t-U_{t'}:=\{(Z,\zeta'-\zeta):(Z,\zeta)\in U_t,(Z,\zeta')\in U_{t'}\}$, where $\zeta,\zeta'\in\mathbb{C}^r$, intersects at most one connected 
component of $\Lambda:=\bigcup_Z\Lambda_Z$. If this component does exist, denote it by
$\theta_{t,t'}=(\alpha_{t,t'},\beta_{t,t'})\in\mathbb{Z}^{2r}$. 

Here I explain this condition (3) for better understanding. When $Z\in\mathcal{H}_r$ varies, 
the lattice $\Lambda_Z=\mathbb{Z}^r\cdot Z\oplus\mathbb{Z}^r$ above $Z$ also varies, and each 
point in the lattice becomes a hypersurface as $Z$ takes all points in $\mathcal{H}_r$. This 
makes our universal lattice $\Lambda$ consist of $\mathbb{Z}^{2r}$-connecting components, 
hence we are able to use such $\theta$ to denote them.

Now we compute the image of $\de z_j$. Recall our exact sequence
\begin{equation*}   
    0\longrightarrow\pi^*\Omega_{\mathcal{H}_r/\mathbb{C}}\longrightarrow\Omega_{\mathcal{A}/
    \mathbb{C}}\longrightarrow\Omega_{\mathcal{A}/\mathcal{H}_r}\longrightarrow 0.
\end{equation*}
Here is a standard method of diagram chasing to compute the connecting map.
Note first that each $\de z_j\in\Omega_{\mathcal{A}/\mathbb{C}}(U_t)$ lifts the corresponding section, which is also denoted by $\de z_j$ as a section of $\pi_*\Omega_{\mathcal{A}/\mathcal{H}_r}$.
Denote by $(\de z_j)_t$ the pushforward of $\de z_j$ via $U_t\longrightarrow\bar{U}_t$, then it is
sufficient to compute $(\de z_j)_{t'}-(\de z_j)_t$ on the overlap $\bar{U}_{t,t'}$, which by the
admissible condition is bijective to $U_{t,t'}$.

Recall the definition of $\mathcal{A}$, the isomorphism $U_{t,t'}\cong U_{t',t}$ sends $(Z,\zeta)$ to
$(Z,\zeta+\alpha_{t,t'}\cdot Z+\beta_{t,t'})$. Thus the pull-back of $z_j\in\mathcal{O}(U_{t',t})$
to $\mathcal{O}(U_{t,t'})$ becomes
\begin{equation*}
    z'_j=z_j+\alpha_{t,t'}\cdot Z_{*,j}+\beta_{t,t',j},
\end{equation*}
where $Z_{*,j}$ means the $j$-th column of $Z$, while $\beta_{t,t',j}$ is the $j$-th term in $\beta_{t,t'}$. One concludes that the pull-back of $(\de z_j)_{t'}-(\de z_j)_t$ is 
\begin{equation*}
    \de z'_j-\de z_j=\alpha_{t,t'}\cdot \de Z_{*,j}
\end{equation*}
Thus this implies our claim above immediately after replacing the inner product of vectors to
$r$-terms sum.

Now, to complete our proof about the first statement of this theorem, it remains to apply the
Lemma \ref{app} above to translate $h(\delta_i)$ into an element in $\Lie(\mathcal{A}_{Z^0})$.
In other words, we need to find some $w_i\in\mathbb{C}^r\cong\Lie(\mathcal{A}_{Z^0})$, 
such that $h(\delta_i)$=$(2\pi i)h(E(w_i,\cdot))$, as we have
already chosen the basis of $\Lie(\mathcal{A}/\mathcal{H}_r)$ above.
But this is straight-forward from our choice of the Riemann form $E$, since the relation becomes
\begin{equation*}
    \delta_i(\theta^0)=\alpha_i^0=(2\pi i)E(w_i,\theta^0)
\end{equation*}
which holds for any $\theta^0=(\alpha^0,\beta^0)$. Then $w_i=(0,...,(2\pi i)^{-1},...,0)$, the only nonzero term is $\beta_i^0$. 
This clearly implies our first result.

To prove the second result, note that if we take dualization of $\phi$ and apply the first result, we even have a simpler formula for morphism
\begin{equation*}
    \phi':\underline{\Omega}_{\mathcal{A}/\mathcal{H}_r}^{\otimes 2}\longrightarrow\Omega_{\mathcal{H}_r/\mathbb{C}}.
\end{equation*}
It is obvious that this map is given by
\begin{equation*}
    \de z_i\otimes\de z_j\mapsto\frac{1}{2\pi i}\de Z_{ij}
\end{equation*}
Then recall our discussion in \ref{construct}, we just need to check the induced isomorphism 
\begin{equation*}
    \underline{\Omega}_{\mathcal{A}/\mathcal{H}_r}^{\otimes 2}/\mathcal{R}'\longrightarrow\Omega_{\mathcal{H}_r/\mathbb{C}},
\end{equation*}
where $\mathcal{R}'$ is generated by $\{\de z_i\otimes\de z_j-\de z_j\otimes\de z_i\}_{i,j}$,
see Theorem \ref{deformation}. Finally, we arrive at the isomorphism of line bundles
\begin{equation*}
    \psi:\underline{\omega}_{\mathcal{A}/\mathcal{H}_r}^{\otimes 2}\longrightarrow\omega_{\mathcal{H}_r/\mathbb{C}}
\end{equation*}
gives
\begin{equation*}
    (\wedge^i \de z_i)^{\otimes r+1}\mapsto(\frac{1}{2\pi i})^{\frac{r(r+1)}{2}}(\de\tau),
\end{equation*}
This finishes the proof.
\end{proof}

\subsection{Explicit map for twisted Hilbert modular vaieties}
In this subsection we give an explicit expression of \ref{com2}. The outlines of the statement
and proof in this subsection 
are quite similar to those in the case of Hilbert modular varieties, but as we promised above,
we are going to show the computation when $g>1$.

Note that in this case the universal abelian
scheme $\pi:\mathcal{A}\longrightarrow\mathcal{H}^g$ is given by
\begin{equation*}
    \mathcal{A}=\mathcal{O}_B\backslash(\mathcal{H}^g\times\mathbb{C}^{2g}),
\end{equation*}
where the action of $\mathcal{O}_B$ is given by
\begin{equation*}
    \beta(\tau,z)=(\tau,z+\sigma(\beta)(\tau_1,1,\tau_2,1,...,\tau_g,1)^t).
\end{equation*}
Here $\tau=(\tau_1,...,\tau_g)$ is the row vector represents an element in $\mathcal{H}^g$, while 
$z=(z_1,...,z_{2g})^t$ is the column vector represents an element in $\mathbb{C}^{2g}$, $\sigma(\beta)$
denotes the $2g\times 2g$ matrix by identifications $\sigma_i: B\otimes_F \mathbb{R}\cong M_2(\mathbb{R})$, i.e.,
\begin{equation*}
    \sigma(\beta)=
    \begin{pmatrix}
        \sigma_1(\beta) & & \\ & ... & \\ & & \sigma_g(\beta)
    \end{pmatrix}
\end{equation*}
is a block diagonal matrix. For simplicity, we will use $\beta(\tau,1)^t$ for $\sigma(\beta)(\tau_1,1,\tau_2,1,...,\tau_g,1)^t$. Be careful about this since in our notation $(\tau,1)$ is not
$(\tau_1,...\tau_g,1,...,1)$ but $(\tau_1,1,\tau_2,1,...,\tau_g,1)$!

Note that for each $\tau\in\mathcal{H}^g$, there is an canonical uniformization
\begin{equation*}
    \mathcal{A}_\tau=\mathbb{C}^{2g}/\Lambda_\tau,\quad \Lambda_\tau=\mathcal{O}_B(\tau,1)^t.
\end{equation*}
It is routine to check that $\Lambda_\tau$ is a full lattice in $\mathbb{C}^{2g}$. Moreover, $\mathcal{A}_\tau$
has a canonical positive Riemann form as follow. Recall we fix some $\mu\in\mathcal{O}_B$ such that $(\mu^2)=d_B$
as an ideal in $\mathcal{O}_F$ with $\mu^2\in \mathcal{O}_F$ totally negative. Then, we have a symplectic pairing
\begin{equation*}
    E:\Lambda_\tau\times\Lambda_\tau\longrightarrow\mathbb{Z},\quad E(\beta(\tau,1)^t,\beta'(\tau,1)^t)=\tr_{F/\mathbb{Q}}(\tr(\mu^{-1}\beta\beta'^*)),
\end{equation*}
where the inner $\tr$ means the reduced trace of quaternion algebra. By \cite[Lemma 43.6.16]
{Voi}, we know that in the case of $g=1$, $E$ or $-E$ is positive definite and then a Riemann 
form, which we assume to be $E$ without of losing generality. In our general 
case, the proof is almost the same, so I just sketch the idea how to modify the proof there to obtain a proof
of the general case. To check the alternating part, there is no
difference since taking $\tr_{F/\mathbb{Q}}$ does not change anything. To check that 
$(x,y)\mapsto E_\mathbb{R}(i\beta(\tau,1)^t,\beta'(\tau,1)^t)$ is symmetric and positive definite,
being symmetric is again the same as the case of $F=\mathbb{Q}$, while in the argument of being positive definite,
we replace each $>0$ by being totally positive, i.e., the image under each archimedean place $\sigma_i$ is
positive. Thus, we conclude that $E$ gives principal polarizations
\begin{equation*}
    \lambda:\mathcal{A}\longrightarrow\mathcal{A}^t,\quad \lambda: \mathcal{A}_\tau\longrightarrow\mathcal{A}_\tau^t.
\end{equation*}

Moreover, denote by $\Lambda$ the inverse image in $\mathcal{H}^g\times\mathbb{C}^{2g}$ of the identity section
of $\mathcal{A}\longrightarrow\mathcal{H}^g$, i.e., $\epsilon:\mathcal{H}^g\longrightarrow\mathcal{A}$. Then 
over each $\tau\in\mathcal{H}^g$, the fiber $\Lambda_\tau$ is a lattice in $\mathbb{C}^{2g}$, hence we have
\begin{equation*}
    \Lambda=\bigcup_{\tau\in\mathcal{H}^g}\Lambda_\tau=\bigcup_{\tau\in\mathcal{H}^g}\mathcal{O}_B(\tau,1)^t
    =\{\beta(\tau,1)^t:\beta\in\mathcal{O}_B,\tau\in\mathcal{H}^{g}\}.
\end{equation*}
This provides us with a biholomorphic map
\begin{equation*}
    \Lambda\longrightarrow\mathcal{H}^g\times\mathcal{O}_B,\quad \beta(\tau,1)^t\longrightarrow(\tau,\beta),
\end{equation*}
and then a natural bijection $\pi_0(\Lambda)\longrightarrow\mathcal{O}_B$.

Now we are ready to compute the explicit formula of the Kodaira--Spencer map \eqref{com2}. Note 
that under the notation above, we have
\begin{equation*}
    \underline{\Omega}_{\mathcal{A}/\mathcal{H}^g}=\mathcal{O}_{\mathcal{H}^g}\de z,\quad
    \underline{\omega}_{\mathcal{A}/\mathcal{H}^g}=\mathcal{O}_{\mathcal{H}^g}\bigwedge_{i=1}^{2g}\de z_i,\quad
    \Omega_{\mathcal{H}^g/\mathbb{C}}=\mathcal{O}_{\mathcal{H}^g}\de \tau,\quad
    \omega_{\mathcal{H}^g/\mathbb{C}}=\mathcal{O}_{\mathcal{H}^g}\bigwedge_{i=1}^g\de \tau_i.
\end{equation*}
We also write 
\begin{equation*}
    \mu=\begin{pmatrix}
        \sigma_1(\mu) & & \\ &...& \\ & & \sigma_g(\mu)
    \end{pmatrix}, \quad
    \sigma_i(\mu)=\begin{pmatrix}
        a_i & b_i\\ c_i & d_i
    \end{pmatrix},
\end{equation*}
where $a_i,b_i,c_i,d_i\in \mathbb{R}$ for $i=1,..,g$ are coefficients of these matrices. Now 
we state the main result of this subsection. In fact, our result is a higher dimensional 
analogue of \cite[Theorem 4.2]{Yuan2}.

\begin{theorem}\label{computation2}
The Kodaira--Spencer map
\begin{equation*}  \phi:\underline{\Omega}_{\mathcal{A}/\mathcal{H}^g}\longrightarrow\Lie(\mathcal{A}/\mathcal{H}^g)\otimes\Omega_{\mathcal{H}^g/\mathbb{C}}
\end{equation*}
gives
\begin{equation*}
    \phi(\de z_{2i-1})=\frac{1}{2\pi i}(b_i\frac{\partial}{\partial z_{2i-1}}+d_i\frac{\partial}{\partial z_{2i}})\otimes \de \tau_i,
\end{equation*}
\begin{equation*}
    \phi(\de z_{2i})=-\frac{1}{2\pi i}(a_i\frac{\partial}{\partial z_{2i-1}}+c_i\frac{\partial}{\partial z_{2i}})\otimes \de \tau_i,
\end{equation*}
where $i=1,...,g$.
Therefore, the map
\begin{equation*}
    \psi:\underline{\omega}_{\mathcal{A}/\mathcal{H}^g}^{\otimes 2}\longrightarrow\omega_{\mathcal{H}^g/\mathbb{C}}^{\otimes 2}
\end{equation*}
induced by $\phi$ gives
\begin{equation*}
    \psi\left((\bigwedge_{j=1}^{2g}\de z_j)^{\otimes 2}\right)=\left(\frac{d_B}{(2\pi i)^2}\right)^{g}(\bigwedge_{i=1}^g\de \tau_i)^{\otimes 2}.
\end{equation*}
\end{theorem}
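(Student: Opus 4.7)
The plan is to mirror the argument of Theorem \ref{explicit1}, combining the \v{C}ech cohomology computation of \S\ref{Cech} with Lemma \ref{app}, but now adapted to the quaternion-linear lattice $\Lambda_\tau = \mathcal{O}_B(\tau,1)^t$ and the Riemann form $E(\beta(\tau,1)^t,\beta'(\tau,1)^t) = \tr_{F/\QQ}\tr(\mu^{-1}\beta(\beta')^*)$. First I would fix a basepoint $\tau^0 \in \mathcal{H}^g$ and choose an admissible cover $\{U_t\}_{t\in I}$ of $\mathcal{H}^g \times \mathbb{C}^{2g}$ as in \S\ref{Cech}. The bijection $\pi_0(\Lambda) \cong \mathcal{O}_B$ lets me label the at-most-one lattice component in each difference $U_t - U_{t'}$ by a unique element $\beta_{t,t'} \in \mathcal{O}_B$, reducing the connecting map $\phi_0$ on the fiber over $\tau^0$ to an explicit cocycle calculation.

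Next I would lift each $\de z_j$ to $U_t$, transport it to $U_{t'}$ via the identification $(\tau,z) \mapsto (\tau, z+\beta_{t,t'}(\tau,1)^t)$, and compute the differences on overlaps. Since the $(2i-1)$-st coordinate of $\beta(\tau,1)^t$ is $\sigma_i(\beta)_{11}\tau_i + \sigma_i(\beta)_{12}$ and its $(2i)$-th coordinate is $\sigma_i(\beta)_{21}\tau_i + \sigma_i(\beta)_{22}$, the resulting cocycles on the fiber are $\sigma_i(\beta_{t,t'})_{11}\, \de\tau_i$ for $\de z_{2i-1}$ and $\sigma_i(\beta_{t,t'})_{21}\, \de\tau_i$ for $\de z_{2i}$. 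By the \v{C}ech recipe of \S\ref{Cech}, these represent the classes $h(\delta_{i,k}) \otimes \de\tau_i$ with $\delta_{i,k}(\beta) := \sigma_i(\beta)_{k1}$ for $k=1,2$, viewed as elements of $\Hom_\ZZ(\Lambda_{\tau^0},\mathbb{C})$.

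To translate back to $\Lie(\mathcal{A}_{\tau^0})$, I apply Lemma \ref{app} and solve $\delta_{i,k}(\beta(\tau^0,1)^t) = 2\pi i\cdot E(w_{i,k}, \beta(\tau^0,1)^t)$ for every $\beta \in \mathcal{O}_B$. Expanding the Riemann form through the identification $\sigma = (\sigma_1,\ldots,\sigma_g): B \otimes_\QQ \mathbb{R} \cong M_2(\mathbb{R})^g$, using
\begin{equation*}
    \sigma_i(\mu)^{-1} = \frac{1}{a_id_i - b_ic_i}\begin{pmatrix} d_i & -b_i \\ -c_i & a_i \end{pmatrix},
\end{equation*}
the reduced trace on $M_2(\mathbb{R})$, and the involution $*$, a direct linear-algebra computation should yield $w_{i,1} = \tfrac{1}{2\pi i}(b_i e_{2i-1} + d_i e_{2i})$ and $w_{i,2} = -\tfrac{1}{2\pi i}(a_i e_{2i-1} + c_i e_{2i})$, producing the stated formulas for $\phi$. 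I expect the main obstacle to lie exactly in this step: faithfully tracking the involution $*$ through the matrix identifications $\sigma_i$ and matching the pairing $\beta' \mapsto \tr_{F/\QQ}\tr(\mu^{-1}\beta(\beta')^*)$ against the $(k,1)$-entry functional, so that the coefficients $a_i,b_i,c_i,d_i$ land in precisely the positions and signs claimed.

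Once the formula for $\phi$ is established, the second statement is purely formal. Dualize to obtain $\phi':\underline{\Omega}_{\mathcal{A}/\mathcal{H}^g}^{\otimes 2} \to \Omega_{\mathcal{H}^g/\mathbb{C}}$, pass to the quotient by the relations $\mathcal{R}$ of Theorem \ref{deformation} as in \S\ref{construct}, and compute the image of $(\wedge_{j=1}^{2g}\de z_j)^{\otimes 2}$ under the induced map on top exterior powers. Wedging the pairs $(\de z_{2i-1},\de z_{2i})$ against $(b_i,d_i)$ and $(-a_i,-c_i)$ produces the $2\times 2$ determinant $\det \sigma_i(\mu) = \sigma_i(q(\mu))$ in each factor, and since $q(\mu) = -\mu^2$ with $(\mu^2) = d_B$ in $\mathcal{O}_F$, the product of the $g$ squared contributions collapses to $d_B^g$, giving the coefficient $d_B^g/(2\pi i)^{2g}$ on $(\wedge_i \de\tau_i)^{\otimes 2}$.
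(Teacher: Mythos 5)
Your proposal is correct and follows essentially the same route as the paper: the same admissible-cover \v{C}ech cocycle computation labelled by $\beta_{t,t'}\in\mathcal{O}_B$, the same reduction to an $(j,1)$-entry functional $l_{j,2i-1}$ on $\sigma(\mathcal{O}_B)$, the same application of Lemma \ref{app} to solve $E(w_j,\cdot)=l_{j,2i-1}$ using the explicit form of the Riemann form, and the same wedging of the pairs $(\de z_{2i-1},\de z_{2i})$ producing $\det\sigma_i(\mu)$ per factor. The only cosmetic discrepancy is that for the second statement you invoke the $\mathcal{R}$-quotient of Theorem \ref{deformation} (as in the Hilbert--Siegel case), whereas the paper's construction for the twisted case proceeds via the $\df$ / polarization / determinant route of \S\ref{construct}; the fiber-wise wedge computation and the resulting coefficient $d_B^g/(2\pi i)^{2g}$ agree.
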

\begin{proof}
Once again, our main idea is to check the result locally at each fiber. Throughout this proof, we fix an
element $\tau_0\in\mathcal{H}^g$. Consider the connecting morphism
\begin{equation*}
    \phi_0:\pi_*\Omega_{\mathcal{A}/\mathcal{H}^g}\longrightarrow R^1\pi_*\mathcal{O}_\mathcal{A}\otimes\Omega_{\mathcal{H}^g/\mathbb{C}},
\end{equation*}
we claim that on the fibers above $\tau_0$,
\begin{equation*}
    \phi_0(\de z_{2i-1})|_{\tau_0}=h(l_{2i-1,2i-1})\otimes\de \tau_i|_{\tau_0},
\end{equation*}
\begin{equation*}
    \phi_0(\de z_{2i})|_{\tau_0}=h(l_{2i,2i-1})\otimes\de \tau_i|_{\tau_0}.
\end{equation*}
Here the canonical map
\begin{equation*}
    h:\Hom_{\mathbb{Z}}(\Lambda_{\tau_0},\mathbb{C})\longrightarrow H^1(\mathcal{A}_{\tau_0},\mathcal{O}_{\mathcal{A}_{\tau_0}})
\end{equation*}
is defined in section \ref{Cech}, while 
\begin{equation*}
    l_{j,k}:\Lambda_{\tau_0}=\sigma(\mathcal{O}_B)(\tau_0,1)^t\longrightarrow\sigma(\mathcal{O}_B)\longrightarrow\mathbb{R}
\end{equation*}
is a map to $\mathbb{R}$ taking the $(j,k)$-coefficient of $\sigma(\mathcal{O}_B)\subset M_{2g}(\mathbb{R})$, 
recall that $\sigma(\mathcal{O}_B)$ means those huge block diagonal matrices mentioned before.

To prove this claim, we again use \v{C}ech cohomology as before. Similarly, we again take an 
admissible cover of $\mathcal{A}$ by the universal cover 
$\mathcal{H}^g\times\mathbb{C}^{2g}$. We take a set $\{U_t\}_{t\in I}$ of open subsets 
$U_t\subset\mathcal{H}^g\times\mathbb{C}^{2g}$ satisfying the following conditions:

(1) every compostion $U_t\longrightarrow\mathcal{H}^{g}\times\mathbb{C}^{2g}\longrightarrow\mathcal{A}$ is 
injective,

(2) the induced map $\bigcup_{t\in I}U_t\longrightarrow\mathcal{A}$ is surjective,

(3) for any $t,t'\in I$, the difference $U_{t'}-U_t=\{(\tau,z'-z):(\tau,z)\in U_t,(\tau,z')\in U_{t'}\}$
in $\mathcal{H}^{g}\times\mathbb{C}^{2g}$ intersects at most one connected component of $\Lambda$ defined before.
Denote by $\beta_{t,t'}\in\mathcal{O}_B$ the element representing this connected component if non-empty,
note that here we use the natural bijection $\pi_0(\Lambda)\longrightarrow\mathcal{O}_B$.

Now we do some standard diagram chasing to compute the connecting morphism of the exact sequence
\begin{equation*}
     0\longrightarrow\pi^*\Omega_{\mathcal{H}^g/\mathbb{C}}\longrightarrow\Omega_{\mathcal{A}/\mathbb{C}}
     \longrightarrow\Omega_{\mathcal{A}/\mathcal{H}^g}\longrightarrow 0.
\end{equation*}
Note that the section $\de z_j$ of $\Omega_{\mathcal{A}/\mathbb{C}}$ over each $U_t$ lifts the section
$\de z_j$ of $\pi_*\Omega_{\mathcal{A}/\mathcal{H}^g}$, where $j=1,...,2g$ corresponds to $2i-1$ or $2i$. 
Denote by $(\de z_j)_t$ the pushforward of $\de z_j$ via $U_t\longrightarrow\bar{U}_t$. Then in order to find 
the image of $\de z_j$ under connecting morphism, it is equivalent to compute $(\de z_j)_{t'}-(\de z_j)_t$ on
the overlap $\bar{U}_{t,t'}=\bar{U}_t\cap\bar{U}_{t'}$, which represents a cocycle in the sense of \v{C}ech
cohomology. We also denote by $U_{t,t'}$ or $U_{t',t}$ the subset of $U_t$ or $U_{t'}$ which bijective to
$\bar{U}_{t,t'}$. By definition, the isomorphism $U_{t,t'}\longrightarrow U_{t',t}$ sends $(\tau,z)$ to
$(\tau,z+\beta_{t,t'}(\tau,1)^t)$. Thus, the pull-back of the function $z_j\in\mathcal{O}(U_{t',t})$ to
$\mathcal{O}(U_{t,t'})$ becomes
\begin{equation*}
    z'_j=z_j+l_{j,2i-1}(\beta_{t,t'})\tau_i+l_{j,2i}(\beta_{t,t'}).
\end{equation*}
Here we write $\sigma_i(\beta_{t,t'})=\begin{pmatrix}
    l_{2i-1,2i-1}(\beta_{t,t'}) & l_{2i-1,2i}(\beta_{t,t'}) \\l_{2i,2i-1}(\beta_{t,t'}) & l_{2i,2i}(\beta_{t,t'}) 
\end{pmatrix}$ for the $i$-th block in the big matrix $\sigma(\beta_{t,t'})$, while the right hand sides is
computed by taking matrices product for $\beta_{t,t'}$ and $(\tau,1)^t$.
From this result, we conclude that the pull-back of $(\de z_j)_{t'}-(\de z_j)_t$ to $U_{t,t'}$ is
\begin{equation*}
    \de z'_j-\de z_j=l_{j,2i-1}(\beta_{t,t'})\de \tau_i.
\end{equation*}
As a consequence, we see that the class $\phi_0(\de z_j)$ is represented by \v{C}ech cocycle
\begin{equation*}
    l_{j,2i-1}((\beta_{t,t'}))_{t,t'\in I}\otimes\de \tau_i.
\end{equation*}
By restricting to the fiber $\mathcal{A}_{\tau_0}$ and taking $j=2i-1,2i$ separately, we prove the claim.

In order to conclude the first statement, now we use Lemma \ref{app} to give an explicit formula for $h(l_{j,2i-1})$. 
For convenience we still use index $i,j$ with $j=2i-1$ or $j=2i$. Since the
lemma implies the map $\mathbb{C}^{2g}\cong\Lie(\mathcal{A}_{\tau_0})\longrightarrow H^1(\mathcal{A}_{\tau_0},\mathcal{O}_{\mathcal{A}_{\tau_0}})$ is given by $\rho:z\mapsto 2\pi ih(E(z,\cdot)),$
we want to find some $w_j\in\mathbb{C}^{2g}$ such that
\begin{equation*}
    h(l_{j,2i-1})=(2\pi i)^{-1}\rho(w_j)=h(E(w_j,\cdot)),
\end{equation*}
i.e., we need to find $w_j\in\mathbb{C}^{2g}$ such that 
$E(w_j,\cdot)=l_{j,2i-1}$ in $\Hom_\mathbb{Z}(\Lambda_{\tau_0},\mathbb{C})$. Set $w_j=\beta_j(\tau_0,1)^t$ for
some $\beta_j\in\mathcal{O}_B$, by the definition of our Riemann form, then it is equivalent to find $\beta_j$
such that
\begin{equation*}
    \tr_{F/\mathbb{Q}}(\tr(\mu^{-1}\beta_j\beta'^*))=l_{j,2i-1}(\beta')
\end{equation*}
for any $\beta'\in\mathcal{O}_B$. Note that the left hand side is 
$\sum_{i=1}^g\sigma_i(\tr(\mu^{-1}\beta_j\beta'^*)$, so we are able to determine the block diagonal matrix
$\sigma(\beta_j)$ block-from-block. Then it is obvious that for any $i'\ne i$, the $2\times2$ block 
$\sigma_{i'}(\beta_j)$ is zero, while for $\sigma_i(\beta_j)$ we have
\begin{equation*}
    \sigma_i(\beta_{2i-1})=\begin{pmatrix}
        0 & b_i \\ 0 & d_i
    \end{pmatrix},
\end{equation*}
\begin{equation*}
    \sigma_i(\beta_{2i})=\begin{pmatrix}
        0 & -a_i \\ 0 & -c_i
    \end{pmatrix}
\end{equation*}
Here $\beta'^*$ is $\tr(\beta')\id-\beta'$.
Thus we conclude that 
\begin{equation*}
    w_{2i-1}=(0,...,b_i,d_i,...,0),
\end{equation*}
\begin{equation*}
    w_{2i}=(0,...,-a_i,-c_i,...,0),
\end{equation*}
where the only nontrivial terms are the $(2i-1)$-th and $2i$-th elements in the vector. Then these vectors
corresponds to $b_i\frac{\partial}{\partial z_{2i-1}}+d_i\frac{\partial}{\partial z_{2i}}$ and
$-a_i\frac{\partial}{\partial z_{2i-1}}-c_i\frac{\partial}{\partial z_{2i}}$ under identification.

Finally, since we have $\phi_0=(\rho\otimes 1)\circ\phi$ by definition, while $h(l_{j,2i-1})=(2\pi i)^{-1}\rho(w_j)$
from the above argument, we have
\begin{equation*}
    \phi_0(\de z_{2i-1})|_{\tau_0}=\frac{1}{2\pi i}\rho(b_i\frac{\partial}{\partial z_{2i-1}}+d_i\frac{\partial}{\partial z_{2i}})\otimes\de \tau_i|_{\tau_0},
\end{equation*}
\begin{equation*}
    \phi_0(\de z_{2i})|_{\tau_0}=\frac{1}{2\pi i}\rho(-a_i\frac{\partial}{\partial z_{2i-1}}-c_i\frac{\partial}{\partial z_{2i}})\otimes\de \tau_i|_{\tau_0}.
\end{equation*}
This complete our first statement.

For the second statement, we apply the first result. Then we know for the morphism
\begin{equation*}
    \underline{\Omega}_{\mathcal{A}/\mathcal{H}_g}\longrightarrow
    T_{\mathcal{A}/\mathcal{H}_g}\otimes\Omega_{\mathcal{H}_g/\mathbb{C}}^{\otimes 2},
\end{equation*}
we have explicit formula
\begin{equation*}
    \de z_{2i-1}\wedge\de z_{2i}\mapsto \frac{a_id_i-b_ic_i}{(2\pi i)^2}(\frac{\partial}{\partial z_{2i-1}}\wedge\frac{\partial}{\partial z_{2i}})\otimes(\de \tau_i)^{\otimes 2},
\end{equation*}
here $i=1,...,g$, and $a_id_i-b_ic_i=\det(\sigma_i(\mu))=d_B$ is an element in 
$\mathcal{O}_F$. Thus, we conclude that the map between line bundle is given by
\begin{equation*}
    \bigwedge_{i=1}^g (\de z_{2i-1}\wedge\de z_{2i})\mapsto \left(\frac{d_B}{(2\pi i)^2}\right)^{g}\bigwedge_{i=1}^g (\frac{\partial}{\partial z_{2i-1}}\wedge\frac{\partial}{\partial z_{2i}}) \otimes (\bigwedge_{i=1}^g\de \tau_i)^{\otimes 2},
\end{equation*}
which is obviously equivalent to our result.
\end{proof}

\subsection{Comparison of metrics}
Finally, we are able to prove the second statements about metrics in our main theorems
\ref{main1} and \ref{main2}. Since everything is 
compatible with pull-back, we only need to compare metric in the complex setting. Once again,
we check two cases separately. Recall our definitions of the Faltings metric and the 
Peterson metric in $\S$\ref{defks}.

In the case of Hilbert modular varieties,
we fix some $Z^0=X^0+iY^0\in\mathcal{H}_r$ and check at the stalk. Once again we assume $g=1$
without loss of generality, and the computation when $g>1$ is given in the case of
twisted Hilbert modular varieties below.
By definition, we have
\begin{equation*}
    \lVert\wedge^{i}\de z_i\rVert_{\Fal}^2(Z^0)=\frac{1}{(2\pi)^{r}}|\int_{\mathcal{A}_{Z^0}}\wedge^i(\de z_i\wedge\de \bar{z}_i)|=\frac{1}{\pi^r}\vol(\mathbb{C}^r/\Lambda_{Z^0}).
\end{equation*}
But by our definition of $\Lambda_{Z^0}$, it is almost trivial that 
\begin{equation*}
    \vol(\mathbb{C}^r/\Lambda_{Z^0})=\det(Y^0).
\end{equation*}
Hence we conclude that
\begin{equation*}
    \lVert\wedge^{i}\de z_i\rVert_{\Fal}^2=\frac{1}{\pi^r}\det(Y).
\end{equation*}
Moreover, we have the Petersson metric 
\begin{equation*}
    \lVert\de \tau\rVert_{\Pet}=2^{\frac{r(r+1)}{2}}(\det Y)^{\frac{r+1}{2}}.
\end{equation*}
Then apply Theorem \ref{explicit1}, especially the correspondence
\begin{equation*}
    \Phi\left((\wedge^i \de z_i)^{\otimes r+1}\right)\longrightarrow(\frac{1}{2\pi i})^{\frac{r(r+1)}{2}}(\de\tau),
\end{equation*}
we conclude that
\begin{equation*}
    \lVert\cdot\rVert_{\Fal}^{r+1}=\lVert\cdot\rVert_{\Pet}.
\end{equation*}

Similarly, in the case of twisted Hilbert modular variety,
we check locally for each $\tau_0\in\mathcal{H}^g$. Then we have
\begin{equation*}
    \lVert\bigwedge_{j=1}^{2g}\de z_j\rVert_{\Fal}^2(\tau_0)=\frac{1}{(2\pi)^{2g}}|\int_{\mathcal{A}_{\tau_0}}\bigwedge_{j=1}^{2g}\de z_j \bigwedge_{j=1}^{2g}\de \bar{z}_j|
    =\frac{1}{\pi^{2g}}\vol(\mathbb{C}^{2g}/\mathcal{O}_B(\tau_0,1)^t),
\end{equation*}
where the volume is defined from Lebesgue measure.

First, we claim that 
\begin{equation*}
\vol(\mathbb{C}^{2g}/\mathcal{O}_B(\tau_0,1)^t)=\prod_{i=1}^g((\Img(\tau_{0,i}))^{2}\vol(M_2(\mathbb{R})/\sigma_i(\mathcal{O}_B))),
\end{equation*}
where we endow $M_2(\mathbb{R})$ the Lebesgue measure again. To prove this claim, it is sufficient to check it
block-by-block for each $i$, since Lebesgue measures' product is still Lebesgue measure. Then see the argument in \cite[page 23]{Yuan2}.

Next, we claim that 
\begin{equation*}
    \vol(M_2(\mathbb{R})/\sigma_i(\mathcal{O}_B))=\sigma_i (d_B)
\end{equation*}
for each $i=1,...,g$. This is because for any full lattice $\Lambda$ of $M_2(\mathbb{R})$, we have
\begin{equation*}
    \vol(M_2(\mathbb{R})/\Lambda)\vol(M_2(\mathbb{R})/\Lambda^{\#})=1,
\end{equation*}
where $\Lambda^{\#}$ is the dual lattice under the standard pairing $(x,y)\mapsto\tr(xy^*)$. This is an elementary
linear algebra conclusion. By \cite[Lemma 15.6.17]{Voi}, we have 
\begin{equation*}
    \#(\sigma_i(\mathcal{O}_B)^\#/\sigma_i(\mathcal{O}_B))=\sigma_i(d_B^2),
\end{equation*}
which implies our claim.

Now we conclude that by taking product for all $i=1,...,g$, we have
\begin{equation*}
    \lVert\bigwedge_{j=1}^{2g}\de z_j\rVert_{\Fal}^2(\tau_0)
    =\frac{d_B^g}{\pi^{2g}}\prod_{i=1}^g(\Img(\tau_{0,i}))^2.
\end{equation*}
As $\tau_0\in\mathcal{H}^g$ varies, this implies
\begin{equation*}
    \lVert\bigwedge_{j=1}^{2g}\de z_j\rVert_{\Fal}^2
    =\frac{d_B^g}{\pi^{2g}}\prod_{i=1}^g(\Img(\tau_{i}))^2.
\end{equation*}
Apply the explicit formula in Theorem \ref{computation2}, we have
\begin{equation*}
    \lVert\bigwedge_{j=1}^{2g}\de z_j\rVert_{\Fal}^2=\lVert\psi(\bigwedge_{j=1}^{2g}\de z_j)\rVert_{\Pet}^2.
\end{equation*}
Thus, we complete the proof of our main theorems.

\

\noindent \small{School of mathematical sciences, Peking University, Beijing 100871, China}

\noindent \small{\it Email: ziqiguo0603@pku.edu.cn}

\end{document}